\theoremstyle{plain}
\newtheorem{theorem}{Theorem}[section]
\newtheorem{lemma}[theorem]{Lemma}
\newtheorem{corollary}[theorem]{Corollary}
\newtheorem{proposition}[theorem]{Proposition}
\theoremstyle{definition}
\newtheorem{definition}[theorem]{Definition}
\newtheorem{remark}[theorem]{Remark}
\newtheorem{example}[theorem]{Example}
\DeclareMathOperator{\Agt}{\mathbb{A}gt} 
\DeclareMathOperator{\St}{St} 
\DeclareMathOperator{\Prop}{\Pi} 
\DeclareMathOperator{\Act}{Act} 
\DeclareMathOperator{\paths}{\mathsf{paths}} 
\newcommand{\void}{\ensuremath{\mathsf{void}\xspace}}
\newcommand{\card}{\ensuremath{\mathsf{card}\xspace}}
\newcommand{\open}{\ensuremath{\mathsf{open}\xspace}}
\newcommand{\avact}{\ensuremath{\mathsf{action}\xspace}}
\newcommand{\tlimbound}{\Gamma} 
\newcommand\refhereortechnicalreport{
\ifthenelse{\boolean{reportversion}}{
\!Section \ref{technicalreport}}
{
\!\!the technical report \cite{technicalreport}}
}
\newcommand{\techrep}[1]{{\color{black} #1}}
\newcommand{\aamasp}[1]{}
\newcommand{\vcut}[1]{}
\newcommand{\rnote}[1]{{\ \color{green}\bf{}{RR: #1}}}
\newcommand{\acta}{\ensuremath{\alpha\xspace}}
\newcommand{\coop}[2][]{\langle\!\langle{#2}\rangle\!\rangle_{_{\!\mathit{#1}}}\,}
\newcommand{\atlx}{\mathord \mathsf{X}\, }
\newcommand{\atlf}{\mathord \mathsf{F}\, }
\newcommand{\atlg}{\mathord \mathsf{G}\, }
\newcommand{\atlu}{\, \mathsf{U} \, }
\newcommand{\atlr}{\, \mathsf{R} \, }
\newcommand{\X}{\atlx}
\newcommand{\F}{\atlf}
\newcommand{\G}{\atlg}
\newcommand{\U}{\atlu}
\newcommand{\bbN}{\ensuremath{\mathbb{N}}}
\newcommand{\atla}{\varphi}
\newcommand{\atlb}{\psi}
\newcommand{\atlsa}{\Phi}
\newcommand{\Logicname}[1]{\ensuremath{\mathsf{#1}}}
\newcommand{\ATL}{\Logicname{ATL}\xspace}
\newcommand{\ATLs}{\Logicname{ATL^*}\xspace}
\newcommand{\ATLplus}{\Logicname{ATL^+}\xspace}
\newcommand{\GTS}{\Logicname{GTS}\xspace}
\newcommand{\CGM}{\Logicname{CGM}\xspace}
\newcommand{\vrf}{\ensuremath{\mathbf{V}\xspace}}
\newcommand{\ovrf}{\ensuremath{\overline{\vrf}\xspace}}
\newcommand{\ctr}{\ensuremath{\mathbf{S}\xspace}}
\newcommand{\octr}{\ensuremath{\overline{\mathbf{S}}\xspace}}
\newcommand{\pl}{\ensuremath{\mathbf{P}\xspace}}
\newcommand{\opl}{\ensuremath{\mathbf{\overline{P}}\xspace}}
\newcommand{\qzero}{\ensuremath{q_{in}\xspace}}
\newcommand{\evalgame}{\ensuremath{\mathcal{G}\xspace}}
\newcommand{\step}{\ensuremath{\mathsf{step}\xspace}}
\newcommand{\bd}{\ensuremath{\mathsf{BD}\xspace}}
\newcommand{\rbb}{\ensuremath{\mathsf{RBB}\xspace}}
\newcommand{\subf}{\ensuremath{\mathrm{SUB}\xspace}}
\newcommand{\defstyle}{\textbf}
\newcommand{\runex}[1]{{\small \textit{ #1}}}
\newcommand{\lgt}{\ensuremath{\mathrm{lgt}\xspace}}
\title{Game-Theoretic Semantics for ATL{\textsuperscript{\large \textbf{+}}} \\ with Applications to Model Checking
}
\date{}
\author{
  Valentin Goranko\footnote{\texttt{valentin.goranko@philosophy.su.se}}\\
  Stockholm University\\
  Sweden
  \and
  Antti Kuusisto\footnote{\texttt{antti.kuusisto@tuni.fi}}\\
  Tampere University\\
  Finland
  \and
  Raine R\"{o}nnholm\footnote{\texttt{raine.ronnholm@tuni.fi}}\\
  Tampere University\\
  Finland
}
\begin{document}

\maketitle

\begin{abstract}
We develop a game-theoretic semantics (\GTS) for the fragment $\ATLplus$ of the alternating-time temporal logic 
$\ATLs$, thereby extending the recently introduced \GTS for \ATL.
We show that the game-theoretic semantics is equivalent to the standard compositional semantics of \ATLplus with perfect-recall strategies. 
Based on the new semantics, we provide an analysis of the memory and time resources needed for model checking $\ATLplus$ and show that strategies of the verifier that use only a very limited amount of memory suffice. Furthermore, using the \GTS, we provide a new algorithm for model checking \ATLplus and identify a natural hierarchy of tractable fragments of \ATLplus that substantially extend \ATL. 

\end{abstract}

%

\section{Introduction}
\label{sect:intro}


The full Alternating-time Temporal Logic \ATLs
\cite{AHK02} is one of the main logical systems used for formalising and verifying strategic reasoning about agents in multi-agent systems. It is very expressive, and that expressiveness comes at a high (2-EXPTIME) price of computational complexity of model checking. Its basic
fragment \ATL ---which can be regarded as the multi-agent extension of CTL--- has, on the other hand, tractable model checking but its expressiveness is rather limited. In particular, \ATL only allows expressing strategic objectives of the type $\coop{A}\Phi$ where $\Phi$ is a simple temporal goal involving a single temporal operator. The intermediate fragment  \ATLplus naturally emerges as a good alternative, essentially extending \ATL to allow  expressing strategic objectives which are Boolean combinations of simple temporal goals. The price for this is a reasonably higher computational complexity of model checking \ATLplus, viz. PSPACE-completeness \cite{BJ}. Still, the PSPACE-completeness result alone gives a rather crude estimate of the amount of computational resources, such as memory, needed for model checking \ATLplus.


\textbf{Main ideas and contributions.} 
In this paper we take an alternative approach to the semantic analysis and model checking of fragments of $\ATLs$, concentrating in particular on fragments of $\ATLplus$. Our analysis is not based on the standard compositional semantics but on a new, game-theoretic semantics (\GTS). The main aims and contributions of the paper are three-fold:

\begin{enumerate}
\item We introduce an adequate game-theoretic semantics for $\ATLplus$ equivalent to the standard (perfect-recall) compositional semantics.

\item We propose new model checking algorithms for \ATLplus and some of its fragments, using the \GTS developed here, rather than the standard semantics. We also analyse more precisely the use of memory resources in \ATLplus via \GTS.

\item We apply the \GTS-based approach to model checking in order to identify new tractable fragments of $\ATLplus$.
\end{enumerate}

The main part of the paper consists of a detailed presentation and analysis of the new \GTS for $\ATLplus$.
In particular, we obtain results similar to those in our earlier work \cite{GKR-AAMAS2016}, where we defined a \GTS for $\ATL$.
We establish, inter alia, the surprising result 
that it is always sufficient to
consider \emph{finite paths only} when 
formulae are evaluated via \GTS, \emph{even when considering infinite models}.
Since we are dealing with $\ATLplus$ as
opposed to $\ATL$, a range of new technical ideas and mechanisms are
needed for the correct evaluation of multiple temporal
goals pursued simultaneously by the proponent coalition.

The approach via \GTS enables us, inter alia, to perform a more precise analysis on the memory resources needed for evaluating \ATLplus-formulae
than the algorithm from \cite{BJ}  
which employs a mix of a path construction procedure for checking strategic formulae $\coop{A}\Phi$ on one hand, and the standard labelling algorithm
on the other hand.
%
%
Our model checking algorithm for \ATLplus follows uniformly a procedure directly based on \GTS and also enables us, inter alia, to identify and correct a flaw in the model checking procedure of \cite{BJ} and some of the claims on which it is based. Yet, the PSPACE upper bound result of \cite{BJ} is easily confirmed by our algorithm, and we provide a new  simple proof of that result. Besides new methods, we use some nice ideas from \cite{BJ}.

As a new complexity result obtained via \GTS, we identify a
natural hierarchy of fragments of \ATLplus that 
extend \ATL and have \emph{tractable} ($\mathrm{PTIME}$-complete) model checking.
The hierarchy is based on bounding the
\emph{Boolean strategic width} 
%
%
of formulae.
We denote the new fragments in the hierarchy by $\ATL^k$ for different positive integers $k$.
%
%
Here $\ATL^k$ contains those
formulae of $\ATLplus$ where subformulae $\coop{A}\Phi$ are restricted such that $\Phi$ is a
Boolean combination of at most $k$ formulae. Note that thus $\ATL^1$ corresponds to plain $\ATL$.

The current paper
extends the results in \cite{GKR-AAMAS2016}, where a \GTS for \ATL is
considered, in various non-trivial ways. 
Firstly, several new ideas and technical notions, such as the \emph{role of a seeker} and
the use of a \emph{truth function}, are introduced here in order to enable the transition
from \ATL to \ATLplus in the \GTS setting.
Secondly, a useful and generally elucidating link between our \GTS and \emph{B\"{u}chi games} is identified. That link applies readily also to the simpler evaluation games in \cite{GKR-AAMAS2016}.
Thirdly, and most importantly, we show how to use the new upgraded semantics in a model checking procedure for \ATLplus and the fragments $\ATL^k$. This would not have been possible with the semantics of \cite{GKR-AAMAS2016}.
The current paper is the journal version of \cite{GKR-AAMAS2017}. We extend
\cite{GKR-AAMAS2017} by, inter alia, including a range of new
results on systems of bounded semantics based on finite transducers.
We analyse the amount of memory resources needed for winning strategies and establish tight lower and upper bounds for it.
We notice that in transducer based semantics, an exponential amount of memory with respect to formula size is required.
However, only a linear amount of this is actually used in any concrete single evaluation process of a formula. 
Based on this we argue that the transducer based approach does not give a complete analysis for the requirement of memory resources.
%
%

\textbf{Structure of the paper.}
After the preliminaries in Section \ref{sec:prelim},  we define a bounded, finitely bounded, and unbounded game-theoretic semantics for \ATLplus in Section \ref{sec:GTS}. In Section~\ref{sec:results} we analyse the various properties of the novel systems of \GTS. In Section~\ref{sec:equivalence} we prove equivalence of the bounded and unbounded versions with the standard compositional semantics of \ATLplus with perfect recall strategies. In Section~\ref{sec:MC} we apply the \GTS to the model checking problem for \ATLplus and identify a hierarchy of tractable fragments of it.  In Section \ref{sec:BoundedMemory} we study the transducer-based bounded memory semantics for these fragments. We then conclude in Section~\ref{sec:concl}. 


\section{Preliminaries}
\label{sec:prelim}


In this section we define concurrent game models and the syntax and the (perfect-recall) semantics for \ATLplus. We also introduce some new terminology and notations that will be used later in this paper.

\begin{definition}
A \defstyle{concurrent game model} 
($\mathit{\CGM}$) 
is a tuple 
\[
\mathcal{M} := (\Agt, \St, \Prop, \Act, d, o, v)
\]
which consists of: \\ 
-- The following non-empty sets:
\defstyle{agents} $\Agt = \{a_1,\dots,a_k\}$, 
\defstyle{states} $\St$, 
\defstyle{proposition symbols} $\Prop$, 
\defstyle{actions} $\Act$; \\
-- The following functions: an \defstyle{action function} 
$d:\Agt\times\St\rightarrow\mathcal{P}(\Act)\setminus\{\emptyset\}$ 
which assigns a non-empty set of actions available to each agent at each state; a
\defstyle{transition function} $o$ which assigns an \defstyle{outcome state}
$o(q, \vec{\acta})$
to each state $q\in\St$ and action profile (a 
tuple of actions $\vec{\acta} = (\alpha_1,\dots,\alpha_k)$
such that $\alpha_i\in d(a_i,q)$ for each $a_i\in\Agt$); 
and finally, a \defstyle{valuation function} $v:\Prop\rightarrow\mathcal{P}(\St)$. 


We use symbols $p, p_0, p_{1}, \ldots$ 
to denote proposition symbols and $q, q_0, q_{1}, \ldots$  to denote states.
Sets of agents 
are called \defstyle{coalitions}. The complement  
$\overline{A}= \Agt\setminus A$ of a coalition $A$ is the \defstyle{opposing coalition} of $A$. The set $\avact(A,q)$ of action tuples \emph{available} to  coalition $A$ at state $q\in\St$ is defined as 
$\avact(A,q):=\{(\alpha_i)_{a_i\in A}\mid \alpha_i\in d(a_i,q) \text{ for each } a_i\in A\}$.
\end{definition}
\begin{example}\label{ex: model}  
Let $\mathcal{M}^* = (\Agt, \St, \Prop, \Act, d, o, v)$, where: 
\aamasp{
$\Agt = \{a_1,a_2\}, \; \St = \{q_0,q_1,q_2,q_3,q_4\}, \;  \Prop = \{p_1,p_2,p_3\}, \; \\ 
\Act = \{\alpha,\beta\}, $  
%
and $d$, $o$ and $v$ defined as shown below:
%
}
\techrep{
\begin{align*}
	&\Agt = \{a_1,a_2\}, \; \St = \{q_0,q_1,q_2,q_3,q_4\} \\
	&\Prop = \{p_1,p_2,p_3\}, \; \Act = \{\alpha,\beta\} \\
	&d(a_2,q_0)=d(a_1,q_1)=\{\alpha,\beta\} \,\text{ and else }\; d(a_i,q_i)=\{\alpha\} \\
	&o(q_0,\alpha\alpha)=q_1, \; o(q_0,\alpha\beta)=q_2, \; o(q_1,\alpha\alpha)=q_2, \\
	&o(q_1,\alpha\beta)=q_3, \; o(q_2,\alpha\alpha)=q_1 \,\text{ and }\, o(q_3,\alpha\alpha)=q_3 \\
	&v(p_1)=\{q_2,q_4\}, \; v(p_2)=\{q_3\} \;\text{ and } v(p_3)=\{q_1\}.
\end{align*}
}

\aamasp{\vspace{-0,2cm}}

\begin{center}
\begin{tikzpicture}[scale=1.8]
	\node at (-0.3,0) [draw, circle] (0) {\phantom{$p_1$}};
	\node at (1,0) [draw, circle] (1) {$p_3$};
	\node at (-0.3,1) [draw, circle] (2) {$p_1$};
	\node at (1,1) [draw, circle] (3) {$p_2$};
	\node at (2.3,0) [draw, circle] (4) {$p_1$};
	\node at (-1.5,1.3) {$\mathcal{M}^*$:};
	\node at (-0.6,-0.3) {$q_0$};
	\node at (1.3,-0.3) {$q_1$};
	\node at (-0.6,1.3) {$q_2$};
	\node at (1.3,1.3) {$q_3$};
	\node at (2.6,0.3) {$q_4$};
	\draw[-latex] (0) to node[below] {\small $\alpha\alpha$} (1);
	\draw[-latex] (0) to node[left] {\small $\alpha\beta$} (2);
	\draw[-latex] (2) to node[above] {\small $\alpha\alpha$} (3);
	\draw[-latex, bend left] (1) to node[left] {\small $\alpha\alpha$} (3);
	\draw[-latex, bend left] (3) to node[right] {\small $\alpha\alpha$} (1);
	\draw[-latex] (1) to node[below] {\small $\beta\alpha$} (4);
	\draw[-latex, loop right] (4) to node {\small $\alpha\alpha$} (4);
\end{tikzpicture}
\end{center}

\end{example}

\aamasp{\vspace{-0,5cm}}

\begin{definition}
\label{def:CGM}
Let $\mathcal{M} = (\Agt, \St, \Prop, \Act, d, o, v)$ be a \CGM.
A \defstyle{path}
in $\mathcal{M}$ is a
sequence $\Lambda:\mathbb{N}\rightarrow \St$ of states 
such that for each $n\in\mathbb{N}$, we have 
$\Lambda[n\!+\!1] = o(\Lambda[n],\vec{\acta})$ for
some admissible action profile 
$\vec{\acta}$ in $\Lambda[n]$. 
A \defstyle{finite path} (aka \defstyle{history})
is a finite prefix sequence of a path in $\mathcal{M}$.
We let $\paths(\mathcal{M})$
denote the set of all paths in $\mathcal{M}$ and $\paths_\text{fin}(\mathcal{M})$
the set of all finite paths in $\mathcal{M}$.\footnote{Note that, accordingly this terminology, a ``path'' always refers to an infinite path. We use this terminology since we mostly consider infinite paths.}

\techrep{
A \defstyle{positional strategy} of an agent $a\in\Agt$ is a function $s_a: \St\rightarrow\Act$ such that $s_a(q)\in d(a,q)$ for each $q\in\St$. 
}
A \defstyle{perfect-recall strategy}, or hereafter just  \defstyle{strategy}, of agent $a\in\Agt$ is a function  $s_a: \paths_\text{fin}(\mathcal{M})\rightarrow\Act$ such that $s_a(\lambda)\in d(a,\lambda[k])$ for each $\lambda\in\paths_\text{fin}(\mathcal{M})$ where $\lambda[k]$ is the last state in $\lambda$. 
A \defstyle{collective strategy} $S_A$ for $A\subseteq\Agt$ is a tuple of individual strategies, one for each agent in $A$. 
With $\paths(q,S_A)$ we denote the set of all paths emerging in plays beginning from $q$ where the agents in $A$ follow the strategy $S_A$.
%
%
\end{definition}
The formulae of \ATLplus are defined by the following grammar.

\smallskip

%
\emph{State formulae}: $\atla::= p \mid \neg \atla \mid \atla\vee\atla \mid \coop{A}\atlsa \quad (p\in \Prop)$

\emph{Path formulae}: 
 $\atlsa::=\atla \mid \neg\atlsa \mid \atlsa\vee\atlsa \mid \atlx \atla \mid \atla \atlu \atla$

\smallskip
\noindent
Other Boolean connectives are defined as usual,
and furthermore, $\atlf\atla$, $\atlg\atla$ and  $\atla\atlr\atlb$ are
abbreviations for $\top \atlu \atla$, $\neg(\top \atlu \neg\atla)$, and 
$\neg(\neg\atla \atlu \neg\atlb)$ respectively.
With $\Phi$ and $\Psi$ we denote path formulae only; $\varphi$, $\psi$, and $\chi$ denote any formulae.

%
\begin{definition}\label{def: compositional semantics for ATL}
Let $\mathcal{M}$ be a $\CGM$.
%
%
\defstyle{Truth} of state and path formulae of \ATLplus is
defined, respectively,
with respect to states $q\in St$ and paths $\Lambda\in\paths(\mathcal{M})$, inductively as follows, where $\varphi, \psi$ are state formulae:
%
\begin{itemize}
\item $\mathcal{M},q\models p$  iff  $q\in v(p)$\ $\mathrm{(\text{for }}p\in\Prop\mathrm{)}$.
\item $\mathcal{M},q\models \neg\varphi$ iff $\mathcal{M},q\not\models\varphi$.
\item $\mathcal{M},q\models \varphi\vee\psi$ iff $\mathcal{M},q\models\varphi$ or $\mathcal{M},q\models\psi$.
\item $\mathcal{M},q\models\coop{A} \Phi$ iff there exists a (perfect-recall) strategy $S_A$ such that $\mathcal{M},\Lambda\models\Phi$ for each $\Lambda\in\paths(q,S_A)$.
\item $\mathcal{M},\Lambda\models\varphi$
iff $\mathcal{M},\Lambda[0]\models\varphi$. 
%
\item $\mathcal{M},\Lambda\models
\X\varphi$ iff $\mathcal{M},\Lambda[1]\models\varphi$.
\item $\mathcal{M},\Lambda\models
\neg\Phi$ iff $\mathcal{M},\Lambda\not\models\Phi$.
\item $\mathcal{M},\Lambda\models\Phi\vee\Psi$ iff $\mathcal{M},\Lambda\models\Phi$ or $\mathcal{M},\Lambda\models\Psi$.
\item $\mathcal{M},\Lambda\models\varphi\U\psi$ iff
there exists $i\in\mathbb{N}$ such that $\mathcal{M},\Lambda[i]\models\psi$ and $\mathcal{M},\Lambda[j]\models\varphi$ \,for all $j < i$.
%
\end{itemize}
\end{definition}
The \defstyle{set of subformulae}, $\subf(\varphi)$, of a formula $\varphi$ is defined as usual. Subformulae with a temporal operator as the main connective will be called \defstyle{temporal subformulae}, while subformulae with $\coop{}$ as the main connective are \defstyle{strategic subformulae}. The subformula $\Psi$ of a formula 
$\varphi = \coop{A}\Psi$ is called the \defstyle{temporal objective of $\varphi$}. 
%
%
%
We also define the set $\mathit{At}(\Phi)$ of \defstyle{relative atoms} of $\Phi$ as follows:

%
\begin{itemize}
\item
$\mathit{At}(\chi\vee\chi^\prime) = \mathit{At}(\chi)\cup \mathit{At}(\chi^\prime)$
and $\mathit{At}(\neg\chi) = \mathit{At}(\chi)$.
\item
$\mathit{At}(\coop{A}\chi) = \{\coop{A}\chi\}$ and $\mathit{At}(p) = \{p\}$
for $p\in\Pi$.
\item
$\mathit{At}(\chi\atlu\chi^\prime) = \{\chi\atlu\chi^\prime\}$
and $\mathit{At}(\X\chi) = \{\X\chi\}$.
\end{itemize}

We say that $\chi\in\mathit{At}(\Phi)$ occurs \defstyle{positively} (resp. \defstyle{negatively})   \defstyle{in $\Phi$} if $\chi$ has an occurrence in the scope of an even (resp. odd) number of negations in $\Phi$.
We denote by $\mathrm{SUB}_{\mathit{At}}(\Phi)$
the subset of $\subf(\Phi)$ containing 
all relative atoms of $\Phi$ 
and also all Boolean combinations $\chi$ of
these relative atoms such that $\chi\in\mathrm{SUB}(\Phi)$.

\aamasp{\vspace{-0,1cm}}

\begin{example}\label{ex: formula}
Let 
\vcut{$\varphi^*:=\neg p_2\wedge (p_1\vee \coop{a_1}\Psi)$,} 
$\varphi^*:= \coop{a_1}\Psi$, where
\aamasp{\vspace{-0,1cm}}\[
	\Psi:= (\neg\atlx p_3\wedge\coop{a_2}\atlx p_1) \,\vee\, (\atlf p_1\wedge(\neg p_1)\atlu p_2).
\aamasp{\vspace{-0,15cm}}\]
Written without using abbreviations, $\Psi$  becomes
\aamasp{\vspace{-0,15cm}}\[
	\neg(\neg\neg\atlx p_3 \vee \neg\coop{a_2}\atlx p_1)\vee \neg(\neg(\top\atlu p_1) \vee \neg((\neg p_1)\atlu p_2)).
\aamasp{\vspace{-0,15cm}}\]
Here $\mathit{At}(\Psi)=\{\atlx p_3,\coop{a_2}\atlx p_1,\top\atlu p_1, (\neg p_1)\atlu p_2\}$, where $\coop{a_2}\atlx p_1$ is a state formula and the rest are path formulae. The formula $\atlx p_3$ occurs negatively in $\Psi$ and the rest of the formulae in $\mathit{At}(\Psi)$ occur positively in $\Psi$.
\end{example}

\section{Game-theoretic semantics}
\label{sec:GTS}

In this section we define 
\emph{bounded}, \emph{finitely bounded} 
and \emph{unbounded evaluation games} for \ATLplus.
These games give rise to three different systems of semantics, namely, the \emph{bounded}, \emph{finitely  bounded} and \emph{unbounded} $\GTS$ for \ATLplus.

These systems of semantics were defined for plain \ATL
already in \cite{GKR-AAMAS2016,GTS-ATL-2018}. The principal difference between
the bounded and unbounded \GTS is that the bounded variant \emph{forces games to 
end after a finite number of steps}. This is a significant difference achieved, as we shall see,
via requiring the players to choose ordinal numbers that can intuitively be considered to determine
upper bounds for game durations (see also Example~\ref{ex: omega branching}). In the unbounded semantics, no such ordinals are
used, and the games can continue for infinitely many rounds.

As explained in
\cite{GKR-AAMAS2016,GTS-ATL-2018}, the difference between bounded and
unbounded semantics is directly analogous to the difference between for-loops and 
while-loops. Indeed, for-loops require an extra parameter that determines the number of loop iterations, and
while-loops can possibly loop infinitely long.

Having both the bounded and unbounded semantics at our disposal will prove beneficial in
Section \ref{sec:MC} where we discuss model checking. Indeed, we shall need
the unbounded semantics for connecting fragments of \ATLplus to B\"{u}chi games and thereby obtaining
novel tractability results. On the other hand, we shall need the
bounded semantics for our proof strategy of Theorem \ref{pspace} which
confirms the PSPACE-completeness of \ATLplus model checking.

The unbounded and bounded semantics will be proved equivalent below. 
The finitely bounded semantics is not equivalent to these two. The difference 
between the finitely bounded and bounded semantics is that the parameters 
with which the players force the games to be finite are possibly infinite 
ordinals in bounded semantics and finite ordinals in finitely bounded semantics.
The finitely bounded and bounded semantics are equivalent over finite models but
not over infinite ones. The reason for introducing finitely bounded semantics is that it
provides a novel, interesting perspective on \ATL and \ATLplus while still being 
equivalent over finite (but not infinite) models with the standard semantics.

Below we shall use some terminology and notational conventions introduced in 
\cite{GKR-AAMAS2016,GTS-ATL-2018}. 

\subsection{Evaluation games: informal description}
%
Given a $\CGM$ $\mathcal{M}$,  a state $\qzero$ and a state formula $\varphi$, the 
\defstyle{evaluation game} $\evalgame(\mathcal{M}, \qzero, \varphi)$ is, intuitively, 
a formal debate between two opponents, \defstyle{Eloise} ({\bf E})  and \defstyle{Abelard} ({\bf A}), about whether the
formula $\varphi$ is true at the state $\qzero$ in the model $\mathcal{M}$. Eloise claims that $\varphi$ is true, so she  (initially) adopts the role of a \defstyle{verifier} in the game,
and Abelard tries to prove the formula false, so he is (initially) the \defstyle{falsifier}.
These roles (verifier, falsifier) can swap in the course of the game when negations are encountered in the formula. 
If $\pl\in\{{\bf E},{\bf A} \}$, 
then ${\opl}$ denotes the \defstyle{opponent} of ${\pl}$,   
i.e., ${\opl}\in \{{\bf E},{\bf A} \}\setminus\{\pl\}$.

We now provide an intuitive account of the
\emph{bounded} evaluation game and the \emph{bounded} \GTS for \ATLplus.
The intuitions underlying the finitely bounded and unbounded $\GTS$ are similar.
A reader unfamiliar with the concept of \GTS may find it useful to
consult, for example, \cite{HintikkaSandu97} for \GTS in general and 
 \cite{GKR-AAMAS2016} or \cite{GTS-ATL-2018} for \ATL-specific \GTS. 
The \GTS for \ATLplus presented here follows the 
general principles of \GTS, with the main original 
feature being the treatment of strategic formulae $\coop{A}\Phi$.
We first give an \emph{informal} account of 
the way such formulae are treated in our evaluation games. 
Formal definitions and some concrete examples will be given further, beginning from Section \ref{semanticsdfnsect}.
The evaluation of \ATLplus formulae of the type $\coop{A}\Phi$ in a given model is
based on constructing finite paths in that model. 
The following two ideas are central.

Firstly, the path formula $\Phi$ in $\coop{A}\Phi$
can be \emph{divided} into \emph{goals for the verifier} ($\vrf$), these being  
the relative atoms $\psi\in\mathit{At}(\Phi)$ that occur \emph{positively} in $\Phi$,  
and \emph{goals for the falsifier} ($\ovrf$), these being 
the relative atoms $\psi\in\mathit{At}(\Phi)$ that occur \emph{negatively} in $\Phi$.
(Some formulae may be goals for both players.) 
For simplicity, let us assume for now that $\Phi$ is in negation normal form and all the atoms in $\mathit{At}(\Phi)$  are temporal formulae of the type $\atlf p$.
Then the verifier's goals are eventuality statements $\atlf p$,
while the falsifier's goals are statements $\atlf p'$
that occur negated; note that the negation of $\atlf p'$ is
equivalent to the safety statement $\atlg \neg p'$.
%
%
The verifier wishes to \emph{verify} her/his\footnote{The genders of the
players may be assigned randomly below at points when this causes no
ambiguities and streamlines the presentation.} goals.
The falsifier, likewise, wants to \emph{verify} her/his goals, i.e.,
the falsifier wishes to \emph{falsify} the related safety statements.
%
%
%

%
Secondly, every temporal goal associated with $\coop{A}\Phi$ has a unique ``finite determination point'' 
on any given path where that goal can be verified by 
the player to whom the goal belongs. This means the following.
If a goal $\atlf p$ of the verifier is true on an infinite path  $\pi$,
then there necessarily exists an earliest point $q$
on that path where the fact that $\atlf p$ 
holds on  $\pi$ becomes \emph{verified} simply
because $p$ is true at $q$. Indeed, the first point of $\pi$ where $p$ is
true is the finite determination point $q$ of $\atlf p$.
Once $\atlf p$ has been verified, it will remain \emph{true on $\pi$}, 
no matter what happens on the path after $q$.
Similarly, concerning falsifier's goals, if $\atlg \neg p'$ is false (and thus $\atlf p'$ true) on an infinite path $\pi'$, there is a unique point where $\atlg \neg p'$ first becomes \emph{falsified}, that point being the first state $q'$ of $\pi'$
where $p'$ is true. That point $q'$ is the finite determination point of the goal $\atlf p'$ of the falsifier.
Furthermore, $\atlg \neg p'$ will
remain false on the path no matter what happens further.
(Note that there is no analogous finite determination point for $\ATLs$-formulae such as $\coop{A}\!\atlg\!\atlf\!p$ on a given infinite path. Note also that we discussed only the simple temporal goals  $\atlf p$ and $\atlf p'$
for simplicity, but every temporal goal---as long as it can be verified by 
the player to whom the goal belongs---does indeed have a finite determination point. This will
become clear below.)

Now, the game-theoretic evaluation procedure of an \ATLplus-formula 
$\coop{A}\Phi$ proceeds roughly as follows.
The verifier is controlling the agents in the coalition $A$ and the falsifier controls the agents in the \defstyle{opposing coalition} $\overline{A}=\Agt\setminus A$.
The players start constructing a path. (Each transition from one state to another is carried
out according to the process ``$\mathsf{Step\, phase}$"
defined formally in Section \ref{transitiongamerules}.)
The verifier is first
given a chance to verify some of her/his goals in $\Phi$.
The falsifier tries to prevent this and to possibly verify some of her/his own goals instead.
During this path construction/verification process, the verifier is said to have the role of the \defstyle{seeker}. 
A player is allowed to stay as the seeker for only a finite number of rounds. This is ensured by requiring the seeker to announce an ordinal\footnote{To see why finite ordinals do not suffice in 
general relates to infinite branching. See, e.g., Example 3.11 of \cite{GTS-ATL-2018} for details.}, 
called \defstyle{timer}\footnote{\small Note that the term ``timer'' is used here differently from \cite{GKR-AAMAS2016, GTS-ATL-2018}.
}, 
before the path construction process begins, and then lower the ordinal each time a new state is reached. 
The process ends when the ordinal becomes zero or when the seeker 
is satisfied, having verified some of her goals.
Since ordinals are well-founded, the process must terminate.

After the verifier has ended her/his seeker turn, the falsifier may either end the game or take the role of the seeker.
If (s)he decides to become the seeker, then (s)he sets a new
timer and the path construction 
process continues for some finite number of rounds. 
When the falsifier is satisfied,
having verified some of her/his goals,
the verifier may again take the seeker's role, and so on. 
Thus, the verifier and falsifier take turns 
being the seeker, trying to reach (verify) their goals.
The number of these alternations is bounded by a
\defstyle{seeker turn counter}
which is a finite number that equals the total number of goals in $\Phi$.
(The formal description of seeker turn alternation is
given in the clause ``$\mathsf{Deciding\, whether\, to}$
$\mathsf{continue\, and\, adjusting\, the\, timer}$" in Section \ref{transitiongamerules}.)

Each time a goal in $\Phi$ becomes verified, this is recorded in a \defstyle{truth function} $T$.
(The recording of verified goals is described formally in the
process ``$\mathsf{Adjusting\, the\, truth\, function}$" defined
in Section \ref{transitiongamerules}.)
The truth function carries the following information at any stage of the game:
\begin{itemize}
\item
The verifier's goals that have been verified. 
\item
The falsifier's goals that have been verified. 
\item All other goals remain \defstyle{open}.
\end{itemize}
When neither of the players wants to become the seeker, or when the seeker turn counter becomes zero, the path construction process \emph{ends} and the 
players play a standard Boolean evaluation game on $\Phi$ by using the values given by $T$; the open goals are given truth values as follows:
\begin{itemize}
\item
The verifier's open goals are \emph{(so far) not verified} and thus considered \emph{false}.
\item
Likewise, the falsifier's open goals are \emph{(so far) not verified} and thus considered \emph{false}. Recall  here that the falsifier's goals occur in the scope of a negation.
\end{itemize}
Next we consider the conditions when a player is ``satisfied'' with the current status of the truth function $T$---and thus wants to end the game---and when (s)he is ``unsatisfied'' and wants to continue the game as the seeker.
Note that when the path construction ends, then every goal is
given a Boolean truth value based on the truth function $T$, as
described above. With these values, the formula $\Phi$ is either true or false.
If $\Phi$ is true with the current values based on $T$, then the verifier can win the Boolean game for $\Phi$; dually, if $\Phi$ is not true with the values based on $T$, then the falsifier can win the Boolean game for $\Phi$. Hence the players want to take the role of the seeker in order to modify the truth function $T$ in such a way that the truth of $\Phi$ with respect to $T$ changes from false to true (whence $\vrf$ is satisfied) or from true to false (whence $\ovrf$ is satisfied).

The truth value of $\Phi$ with respect to $T$
can keep changing when $T$ is modified, but only a finite number of changes is possible. 
Indeed, the maximum number of such truth alternations is the total number of goals in $\Phi$.
%

\techrep{
\medskip

In the general case, formulae of the type $\varphi\atlu\psi$, $\atlx\varphi$ and (state formulae) $\varphi$ may also occur in $\mathit{At}(\Phi)$ as goals, and $\Phi$ does not have to be in negation normal form. Formulae of the type $\varphi\atlu\psi$ can be either verified, by showing that $\psi$ is true, or falsified, by showing that $\varphi$ is not true at related states. State formulae $\varphi$ can only be verified at the initial state and the next-state-formulae $\atlx\varphi$ can only be verified at the second state on the path traveled.
%
}

\aamasp{\vspace{-0,1cm}}

\subsection{Evaluation games: formal description}
\label{semanticsdfnsect}

Now we will present the \defstyle{bounded evaluation game} which uses the \defstyle{bounded transition game} as a subgame for evaluating strategic subformulae. 
Interleaved with the definition we will provide, in \emph{italics}, a running example that uses
$\mathcal{M}^*$ and $\varphi^*$ from Examples~\ref{ex: model} and~\ref{ex: formula}
respectively.
%

\subsubsection{Rules of the bounded evaluation~game} 

Let $\mathcal{M} = (\Agt, \St, \Prop, \Act, d, o, v)$ be a \CGM, $\qzero\!\in\!\St$ a state, $\varphi$ a state formula and $\Gamma>0$ an ordinal called a \defstyle{timer bound}.
The \defstyle{$\tlimbound$-bounded evaluation~game} 
$\evalgame(\mathcal{M}, \qzero, \varphi,\Gamma)$ 
between the players \defstyle{\bf A} and \defstyle{\bf E}  is defined as follows.

A \defstyle{location} of the game is a tuple $({\pl},q,\psi,T)$ where 
${\pl}\in\{\text{\bf A},\text{\bf E}\}$,
$q\in\St$ is a state, $\psi$ is a subformula of $\varphi$
and $T$ is a \defstyle{truth (history) function}, mapping some subset of $\subf(\varphi)$ into $\{\top,\bot,\open\}$.\footnote{We note here that the values of $T$ are only modified during transtion games and that $T$ is always a total function for all subformulae of $\varphi$ that are relevant for the transition game that is played.}

The \defstyle{initial location} of the
game is $(\text{\bf E},\qzero,\varphi,T_{in})$,
where $T_{in}$ is the empty function.
%
In every location $({\pl},q,\psi,T)$, the player \pl\ is called the \defstyle{verifier}
and \opl\ the \defstyle{falsifier} for that location.
Intuitively, $q$ is the current state of the game and $T$ encodes truth values of formulae on a path that has been constructed earlier in the game.
Each location is associated with exactly one of the rules \defstyle{1--6} given below.
First we provide the rules for locations $({\pl},q,\psi,T)$ where $\psi$ is
either a proposition symbol or has a Boolean connective as its
main operator:

\begin{enumerate}
\item[\bf 1.]
A location $({\pl},q,p,T)$, where $p\in\Prop$,
is an \defstyle{ending location} of the evaluation game.
If $T \neq \emptyset$, 
then $\pl$ wins the game if $T(p) = \top$ and else $\opl$ wins.
Respectively, if $T = \emptyset$, 
then ${\pl}$ wins if $q\in v(p)$ and else ${\opl}$ wins.

\item[\bf 2.] From a location $({\pl},q,\neg\psi,T)$
the game moves to the location $({\opl},q,\psi,T)$.

\item[\bf 3.] In a location $({\pl},q,\psi\vee\theta,T)$ the player ${\pl}$
chooses one of the locations $({\pl},q,\psi,T)$ and $({\pl},q,\theta,T)$,
which becomes the next location of the game.
\end{enumerate}

We then define the rules of the evaluation game for
locations with strategic formulae as follows.
%

\begin{enumerate}
\item[\bf 4.] Suppose a location $(\pl,q,\coop{A}\Phi,T)$ is reached.
\begin{itemize}[leftmargin=*]
\item If 
$T \neq \emptyset$, then this location is an ending location where $\pl$ wins if $T(\coop{A}\Phi)=\top$ and else $\opl$ wins. 

\item If 
$T  = \emptyset$, 
then the evaluation game enters a \defstyle{transition game}
$\defstyle{g}(\pl,q,\coop{A}\Phi,\tlimbound)$. The transition game is a
subgame to be defined later on.
The transition game eventually reaches an \defstyle{exit location} $(\pl',q',\psi,T')$,
and the evaluation game continues from that location. Note that an \emph{exit location} only
ends the transition game, so exit locations of transition games and
\emph{ending locations} of the evaluation game are different concepts.
\end{itemize}
\end{enumerate}

The rules corresponding to the temporal connectives are defined using the truth function $T$
(updated in an earlier transition game) as follows.

\begin{enumerate}
\item[\bf 5.]
A location $(\pl,q,\varphi\atlu\psi,T)$ is an ending location of the evaluation game. \\
$\pl$ wins if $T(\varphi\atlu\psi) =\top$ and else $\opl$ wins.

\item[\bf 6.]
Likewise, a location $(\pl,q,\X\varphi,T)$ is an ending location. \\ 
$\pl$ wins if $T(\X\varphi) = \top$ and otherwise $\opl$ wins.
\end{enumerate}

These are the rules of the evaluation game. We note that the timer bound
$\tlimbound$ will be used only in transition games. If $\tlimbound=\omega$, we say that the evaluation game is \defstyle{finitely bounded}. 

\smallskip

\runex{
The  initial location of the finitely bounded evaluation game 
$\mathcal{G}(\mathcal{M}^*,q_0,\varphi^*,\omega)$ 
(see Examples~\ref{ex: model} and \ref{ex: formula}) is $({\bf E},q_0,\coop{a_1}\Psi,\emptyset)$, from where the transition game 
$\defstyle{g}({\bf E},q_0,\coop{a_1}\Psi,\omega)$ begins.
}

\vcut{
\runex{
Consider the evaluation game $\mathcal{G^*}:=\mathcal{G}(\mathcal{M}^*,q_0,\varphi^*,\omega)$ (see Examples~\ref{ex: model} and \ref{ex: formula}). The game begins from the initial location $({\bf E},q_0,\neg p_2\wedge (p_1\vee\coop{a_1}\Psi),\emptyset)$. It is easy to see that the rule for the conjunction is that the falsifier gets to choose either of the conjuncts. So, here Abelard may first choose the next location of the game to be $({\bf E},q_0,\neg p_2,\emptyset)$ or $({\bf E},q_0,p_1\vee\coop{a_1}\Psi,\emptyset)$. If Abelard chooses the first option, then the next location is $({\bf A},q_0,p_2,\emptyset)$, where Abelard loses, since $q_0\notin v(p_2)$. 
Suppose now that Abelard chooses the second option. Now Eloise gets to  choose the next location of the game to be $({\bf E},q_0,p_1,\emptyset)$ or $({\bf E},q_0,\coop{a_1}\Psi,\emptyset)$. If Eloise would choose the first, she would lose immediately since $q_0\notin v(p_1)$. So, suppose Eloise chooses the second option. Then the transition game $\defstyle{g}({\bf E},q_0,\coop{a_1}\Psi,\omega)$ begins.
}
}

\subsubsection{Rules of the transition game}\label{transitiongamerules}

\techrep{Recall that transition games are subgames of evaluation games. Their purpose is to evaluate the truth of strategic subformulae, in a game-like fashion.}  

Now we give a detailed description of transition games.\techrep{\footnote{\small A transition game for $\ATLplus$ is similar to the `embedded game'  introduced in \cite{GKR-AAMAS2016,GTS-ATL-2018} for the \GTS of \ATL. The role of the seeker $\ctr$ here is similar to the role of the controller in that embedded game.}}
A \defstyle{transition game}
$\defstyle{g}(\vrf,q_0,\coop{A}\Phi,\tlimbound)$, 
where ${\vrf}\in\{\text{\bf A},\text{\bf E}\}$,
$q_0\in\St$, $\coop{A}\Phi\in\ATLplus$ and $\tlimbound>0$ is an ordinal,
is defined as follows.
$\vrf$ is called \defstyle{the verifier in the transition game}. 
The game $\defstyle{g}(\vrf,q_0,\coop{A}\Phi,\Gamma)$
is based on \defstyle{configurations}, i.e.,
tuples $(\ctr,q,T,n,\gamma,x)$, where  
the player $\ctr\in\{\mathbf{E},\mathbf{A}\}$ is called the \defstyle{seeker}; 
$q$ is the \defstyle{current state}; 
$T:\mathit{At}(\Phi)\rightarrow\{\top,\bot,\mathsf{open}\}$ is a 
\defstyle{truth function}; 
$n\in\mathbb{N}$ is a \defstyle{seeker turn counter} ($n\leq |\mathit{At}(\Phi)|$);
$\gamma$ is an ordinal called \defstyle{timer};
and $x\in\{\, \defstyle{i},\defstyle{ii},\defstyle{iii}\, \}$ is
an index showing the current \defstyle{phase} of the transition game.
The game\! $\defstyle{g}(\vrf\!,q_0,\!\coop{A}\Phi,\tlimbound)$\! begins\!
at\! the \defstyle{initial configuration} 
$(\vrf,q_0,T_0,|\mathit{At}(\Phi)|,\tlimbound,\defstyle{i})$, 
with $T_0(\chi) = \open$ for all $\chi\in\mathit{At}(\Phi)$.

\smallskip

\runex{
The transition game $\defstyle{g}({\bf E},q_0,\coop{a_1}\Psi,\omega)$ begins from the initial configuration $({\bf E},q_0,T_0,4,\omega,\emph{\defstyle{i}})$, since $|\mathit{At}(\Psi)|=4$. (Note that the timer is initially $\omega$ in transition games occurring within finitely bounded evaluation games, but the timer will always have a finite value thereafter.)}

\smallskip

The transition game then proceeds by iterating the phases \defstyle{i}, \defstyle{ii} and \defstyle{iii}, which we {first} describe informally; detailed formal definitions are given afterwards.
\begin{itemize}[leftmargin=6mm]
\item[\defstyle{i.}] $\mathsf{Adjusting\, the\, truth\, function}$:
In this phase\ the\hspace{1mm}  players\hspace{1mm} make claims on the truth of state formulae at the current state $q$. If $\pl$ makes some claim, then the opponent $\opl$ may either: 1) accept the claim, whence truth function is updated accordingly, or 2) challenge the claim. In the latter case the \emph{transition game ends} 
and truth of the claim is verified in a continued evaluation game.

\item[\defstyle{ii.}] $\mathsf{Deciding\, whether\, to\, continue\, and\, adjusting\, the\, timer}$:
Here the current seeker $\ctr$ may either continue her seeker turn and lower the value of the timer, or end her seeker turn. If $\ctr$ chooses the latter option, then the opponent $\octr$ of the seeker may either 1) take the role of the seeker and announce a new value for the timer or 2) end the transition game, whence the formula $\Phi$ is evaluated based on current values of the truth function.

\item[\defstyle{iii.}] $\mathsf{Step\, phase}$:
Here the verifier $\vrf$ chooses actions for the agents in the coalition in $A$ at the current state $q$. Then $\ovrf$ chooses actions for the agents in the opposing coalition $\overline{A}$. After the resulting transition to a new state $q'$ has been made, the game continues again with phase \defstyle{i}.
\end{itemize}
We now describe the phases \defstyle{i}, \defstyle{ii} and \defstyle{iii} in technical detail:

\bigskip

\noindent
\defstyle{i.} $\mathsf{Adjusting\, the\, truth\, function.}$

Suppose the current configuration is $(\ctr,q,T,n,\gamma,\defstyle{i})$.
Then the truth function $T$ is updated by considering, one by one, each formula $\chi\in\mathit{At}(\Phi)$ in some fixed order\footnote{We will see that the order here is irrelevant for the existence of winning strategies in the evaluation game. This is simply because the player with a winning strategy can make all the claims that are true and oppose all the other claims---regardless of the order in which the formulae are considered.}. 
If $T(\chi)\neq\open$, then the value $\chi$ cannot be updated. Else the value of $\chi$ may be modified according to the rules \defstyle{A} -- \defstyle{C} below.

\smallskip

\defstyle{A.} \emph{Updating $T$ on temporal formulae with $\U$}: Suppose
that $\varphi\atlu\psi\, \in\, At(\Phi)$.
Now first the verifier $\vrf$ may \emph{claim that $\psi$ is true} at the current state $q$.
If $\vrf$ makes that claim, then $\ovrf$ chooses either of the following: 
\begin{itemize}
\item
$\ovrf$ \emph{accepts} the claim of $\vrf$, whence the truth function is updated so that 
$\varphi\atlu\psi$ is assigned value $\top$ ($\varphi\atlu\psi$ becomes \defstyle{verified}), hereafter indicated by $\varphi\atlu\psi\, \mapsto\top$.
\item
$\ovrf$ \emph{challenges} the claim of $\vrf$, whence the transition game ends
at the \defstyle{exit location} $(\vrf,q,\psi,\emptyset)$. 
(We note that, here and further, when a transition game ends, the evaluation game 
continues from the related exit location and the evaluation game will \emph{never}
return to the same exited transition game again.)
\end{itemize}
If $\vrf$ does not claim that $\psi$ is true at $q$,
then $\ovrf$ may make that same claim (that $\psi$ is true at $q$).
If $\ovrf$ makes that claim, then the same two steps above concerning
\emph{accepting} and \emph{challenging} 
are followed, but with $\vrf$ and $\ovrf$ swapped everywhere.

Suppose then that neither of the players claims that $\psi$ is true at $q$. 
Then first $\vrf$ can \emph{claim that $\varphi$ is false} at $q$. 
If $\vrf$ makes that claim, then $\ovrf$ chooses either of the following: 

\begin{itemize}
\item $\ovrf$ accepts the claim, whence the truth function is updated so that
$\varphi\atlu\psi \mapsto \bot$ ($\varphi\atlu\psi$ becomes \defstyle{falsified}).
\item $\ovrf$ challenges the claim, whence the transition game ends
at the exit location $(\ovrf,q,\varphi,\emptyset)$.
\end{itemize}
If $\vrf$ does not claim that $\varphi$ is false at $q$, then $\ovrf$ may make that claim.
If he does, then the same steps as those above are followed, but with $\vrf$ and $\ovrf$ swapped.

\smallskip

\defstyle{B.} \emph{Updating $T$ on proposition symbols and strategic formulae}: The
truth function can be updated on proposition symbols $p\in\mathit{At}(\Phi)$ and formulae $\coop{A'}\Psi\in\mathit{At}(\Phi)$ only when the phase \defstyle{i} is executed for the first time (so, $q = q_0$). 
In this case, given such a formula $\chi$, first $\vrf$ can claim that $\chi$
%
is true at $q$.
Now, if $\ovrf$ accepts this claim, 
then the truth function is updated s.t. $\chi \mapsto \top$. 
If $\ovrf$ challenges the claim, 
then the transition game ends at the exit location $(\vrf,q,\chi,\emptyset)$.
If $\vrf$ does not claim that $\chi$ is true at $q$, then $\ovrf$ may make that claim.
If he does, then the same steps are followed, but with $\vrf$ and $\ovrf$ swapped.

\smallskip

\defstyle{C.} \emph{Updating $T$ on formulae with $\atlx$}: The
truth function can be updated on formulae of type $\X\psi\in\mathit{At}(\Phi)$ only when phase \defstyle{i} is executed for the second time in the transition game
(so, $q$ is some successor of $q_0$).
First $\vrf$ can claim that $\psi$ is true at $q$. 
If $\ovrf$ accepts that claim, 
then the truth function is updated s.t. 
$\atlx\psi \mapsto \top$.
If $\ovrf$ challenges the claim, 
then the transition game ends at the exit location $(\vrf,q,\psi,\emptyset)$.
If $\vrf$ does not claim that $\psi$ is true at $q$, then $\ovrf$ can make that claim.
If he does, the same steps are followed, but with $\vrf$ and $\ovrf$ swapped.

\smallskip

Note that in points \defstyle{B} and \defstyle{C}, the formulae cannot be mapped to $\bot$ by the truth function $T$. But if these formulae are left with the value $\open$, then they will be considered false by default if the transition game ends in stage \defstyle{ii} (and the boolean game is played). Intuitively this is because if no player has claimed these formulae to be true, then players have agreed that they are indeed false.

\smallskip

If neither player makes any claim which would update
the value of a formula $\chi\in\mathit{At}(\Phi)$, then the value of $\chi$ is left $\open$. Once the values of the truth function $T$ have been updated (or left as they are) for all formulae in $\mathit{At}(\Phi)$, a new truth function $T'$ is obtained. 
The transition game then moves to the new configuration $(\ctr,q,T^\prime,n,\gamma,\defstyle{ii})$. 

\smallskip

\runex{ 
In the configuration $({\bf E},q_0,T_0,4,\omega,\emph{\defstyle{i}})$ the players begin adjusting $T_0$ for which initially $T_0(\chi)=\open$ for every $\chi\in\mathit{At}(\Psi)$. Since it is the first round of the transition game, the value of $\atlx p_3$ cannot be modified, but the value of $\coop{a_2}\atlx p_1$ can be modified. 
Suppose that Eloise claims that $\coop{a_2}\atlx p_1$ is true at $q_0$. Now Abelard could challenge the claim, whence the transition game ends and the evaluation game continues from location $({\bf E},q_0,\coop{a_2}\atlx p_1,\emptyset)$ (which leads to a new transition game $\defstyle{g}({\bf E},q_0,\coop{a_2}\atlx p_1,\omega)$). Suppose Abelard does not challenge the claim. Then $\coop{a_2}\atlx p_1$ is mapped to $\top$.
}

\runex{ 
Since $\atlf p_1$ and $(\neg p_1)\atlu p_2$ occur positively in $\Phi$, Eloise has interest only to verify them and Abelard has interest only to falsify them. Eloise could verify $\atlf p_1$ by claiming that $p_1$ is true, or verify $(\neg p_1)\atlu p_2$ by claiming that $p_2$ is true. But,  if Eloise makes either of these claims, then Abelard wins the whole evaluation game by challenging, since $q_0\notin v(p_1)\cup v(p_2)$. Suppose that Eloise does not make any claims. 
Now, Abelard could claim that $\neg p_1$ is not true, in order to falsify $(\neg p_1)\atlu p_2$. But if he does that, he loses the evaluation game if Eloise challenges, since $q_0\notin v(p_1)$. Suppose that Abelard does not make any claims either. Then the transition game proceeds to configuration $({\bf E},q_0,T,4,\omega,\emph{\defstyle{ii}})$, where $T(\coop{a_2}\atlx p_1)=\top$ and $T(\chi)=\open$ for the other $\chi\in\mathit{At}(\Psi)$.
}

\bigskip

\noindent
$\defstyle{ii.}\ \mathsf{Deciding\, whether\, to\, continue\, and\, adjusting\, the\, timer}$.

Suppose a configuration $(\ctr,q,T,n,\gamma,\defstyle{ii})$ has been reached.
Assume first that $\gamma \not= 0$. Then the seeker $\ctr$ can choose whether to continue the transition game as the seeker.
If yes, then $\ctr$ chooses some ordinal $\gamma'<\gamma$ and the transition game continues from $(\ctr,q,T,n,\gamma',\defstyle{iii})$.
If $\ctr$ does not want to continue, or if $\gamma = 0$,
then one of the following applies.
\begin{enumerate}
\item[(a)]
Suppose that $n\not= 0$. Then the player $\octr$ 
chooses whether she wishes to continue the transition game.
If yes, then $\octr$ chooses an ordinal $\gamma'<\tlimbound$ 
(so, $\octr$ in fact \emph{resets} the timer value)
and the transition game continues from $(\octr,q,T,n-1,\gamma',\defstyle{iii})$.
Otherwise the transition game ends
at the exit location $(\vrf,q,\Phi,T)$.
\item[(b)]
Suppose that $n=0$. Then the transition game ends
at the exit location $(\vrf,q,\Phi,T)$.
\end{enumerate}

\runex{ 
In $({\bf E},q_0,T,4,\omega,\emph{\defstyle{ii}})$ Eloise may decide whether to continue the transition game as the seeker. Suppose that Eloise does not continue, whence Abelard may now become the seeker and continue the transition game, or end it. If Abelard ends the transition game, then the evaluation game is continued from $({\bf E},q_0,\Psi,T)$. But because $T(\atlx p_3)=\open$ and $T(\coop{a_2}\atlx p_1)=\top$, Eloise can then win the evaluation game by choosing the left disjunct of $\Psi$ (recall that with these values of $T$ Eloise is then guaranteed to win). Suppose thus that Abelard decides to become the seeker, whence he chooses some $m<\omega$ and the next configuration is $({\bf A},q_0,T,3,m,\emph{\defstyle{iii}})$.
}

\bigskip

\noindent
$\defstyle{iii.}\ \mathsf{Step\, phase}$
\footnote{\small The procedure in this phase is analogous to the \emph{step game}, 
$\step(\vrf,A,q)$, which was introduced for the \GTS for \ATL (\cite{GKR-AAMAS2016,GTS-ATL-2018}).}

Suppose that the configuration is 
$(\ctr,q,T,n,\gamma,\defstyle{iii})$.
\begin{enumerate}
\item[(a)] First, ${\vrf}$ chooses an action $\alpha_i\in d(a_i,q)$ for each $a_i\in A$.
\item[(b)] Then, ${\ovrf}$ chooses an action $\alpha_i\in d(a_i,q)$ for each $a_i\in\overline{A}$.
\end{enumerate}
The resulting action profile produces a \defstyle{successor state} $q':=o(q,\alpha_1,\dots,\alpha_k)$. 
%
%
The transition game then moves to the configuration 
$(\ctr,q',T,n,\gamma,\defstyle{i})$.

\smallskip

\runex{ 
In the configuration $({\bf A},q_0,T,3,m,\emph{\defstyle{iii}})$ Eloise (who is the verifier $\vrf$) first chooses action for agent $a_1$, then Abelard chooses action for agent $a_2$, which produces either successor state  $q_1$ or $q_2$. Then the transition game continues from the configuration $({\bf A},q_j,T,3,m,\emph{\defstyle{i}})$, where $j\in\{1,2\}$.
}

\smallskip

This concludes the definition of the rules for the phases \defstyle{i, ii} and \defstyle{iii} in the transition game $\defstyle{g}(\vrf,q_0,\coop{A}\Phi,\tlimbound)$.

\smallskip

\runex{ 
\aamasp{
Suppose first that the transition game is continued from $({\bf A},q_2,T,3,m,\emph{\defstyle{i}})$. 
Since it is the second round, Abelard could now try to verify $\atlx p_3$ by claiming that $p_3$ is true at $q_2$. However, then Eloise would win by challenging. But if Abelard does not try to verify $\atlx p_3$ now, then the value of $\atlx p_3$ will stay $\open$. In that case Eloise will win the evaluation game simply by not making any more claims in the transition game.
}
}

\runex{ 
\aamasp{
Suppose then that the game continues from $({\bf A},q_1,T,3,m,\emph{\defstyle{i}})$. 
Suppose that Abelard  verifies $\atlx p_3$ by claiming that $p_3$ is true and that Eloise does not challenge. If the transition game now ended at $({\bf E},q_1,\Psi,T')$ with $T'(\atlx p_3)=\top$, Abelard would win. Thus, suppose that Abelard ends his seeker turn and Eloise chooses some finite timer, say $2$. At $({\bf E},q_1,T',2,2,\emph{\defstyle{iii}})$ Eloise can force the resulting state $q_3$ by choosing $\alpha$ for $a_1$. At $({\bf E},q_3,T',2,2,\emph{\defstyle{i}})$ Eloise can verify $(\neg p_1)\atlu p_2$ by claiming that $p_2$ is true at $q_3$. Furthermore, Eloise can move via $q_1$ to $q_4$ and verify $\atlf p_1$ there, before timer reaches $0$. When the evaluation game is eventually continued, Eloise wins by choosing the right disjunct of $\Psi$.
}
}

\techrep{\vspace{-12mm}}

\runex{
\techrep{
Suppose that the transition game continues from the configuration $({\bf A},q_2,T',3,m,\defstyle{i})$. Since it is the second round of the transition game, Abelard could now try to verify $\atlx p_3$ by claiming that $p_3$ is true at $q_2$. However, then Eloise could win by challenging this claim. But if Abelard does not try to verify $\atlx p_3$ at that configuration, then the value of $\atlx p_3$ will stay $\open$. Hence, when Abelard decides to end his seeker's  turn or when the timer $m$ is lowered to $0$, then Eloise may end the transition game and win the evaluation game from a location of the form $({\bf E},\Psi,q',T'')$.
}
}

\runex{
\techrep{
Suppose now that the transition game continues from the configuration $({\bf A},q_1,T',3,m,\defstyle{i})$. Suppose that Abelard verifies $\atlx p_3$ by claiming that $p_3$ is true and that Eloise does not challenge that claim. If the transition game now ended at location $({\bf E},q_1,\Psi,T'')$, where $T''(\atlx p_3)=\top$, Abelard would win. Thus, if Abelard decides to quit the transition game, then Eloise wants to continue as a seeker from configuration $({\bf E},q_1,T'',2,m',\defstyle{iii})$ for some $m'<\omega$. Then Eloise can choose action $\alpha$ for agent $a_1$ and lower the timer to $2$, whence the next configuration is $({\bf E},q_3,T'',2,2,\defstyle{i})$. Eloise can then verify $(\neg p_1)\atlu p_2$ at it by claiming that $p_2$ is true at $q_3$.
Furthermore, Eloise can move via $q_1$ to $q_4$ and verify $\atlf p_1$ there, before the timer reaches $0$. Then Eloise will win when the evaluation game is continued from a location of the form $({\bf E}, q_4,\Psi,T''')$.
}
}

\aamasp{\vspace{-0,8cm}}


\subsubsection{The unbounded evaluation game}
\label{subsec:unbounded}

Let $\evalgame(\mathcal{M}, q, \varphi,\Gamma)$  be a $\tlimbound$-bounded evaluation game. We can define a corresponding \defstyle{unbounded evaluation game}, $\evalgame(\mathcal{M}, q, \varphi)$, by replacing transition games $\defstyle{g}(\vrf,q,\coop{A}\Phi,\tlimbound)$ with \defstyle{unbounded transition games}, $\defstyle{g}(\vrf,q,\coop{A}\Phi)$; these are played with the same rules as $\defstyle{g}(\pl,q_0,\coop{A}\Phi,\Gamma)$  except that timers $\gamma$ are not used in them. Instead, the players can keep the role of a seeker for arbitrarily long and thus the game may last for an infinite number of rounds. In the case of an infinite play, the player who took the last seeker turn loses the entire evaluation game. (Recall that the number of seeker alternations is bounded by the number $|\mathit{At}(\Phi)|$.) 

\aamasp{\vspace{-0,1cm}}


\subsection{Defining the game theoretic semantics}

In this section we define game-theoretic semantics for \ATLplus by equating truth of formulae with the existence of a winning strategy for Eloise in the corresponding evaluation game.
We begin with the following remark which will be relevant for the notion of positional strategies in evaluation games.

\begin{remark}\label{remark}
The description of transition games above is based on a simplified notion of configurations. The phases \defstyle{i}--\defstyle{iii} consist of several ``subphases'' and more information should be encoded into configurations.  The full notion of configuration  should also include:

-- In phase \defstyle{i}, a counter indicating the relative atom currently under consideration by the players; flags for each player indicating whether and what claim (s)he has made on the truth of the current relative atom; a 3-bit flag indicating if it is the first, second, or some later round in the transition game. 

-- For phase \defstyle{ii}, a flag whether the current seeker wants to continue,
and for phase \defstyle{iii}, a record of the
current choice of actions for the agents in $A$ by ${\vrf}$.
%

For technical simplicity, we omit these formal details.
\end{remark}

Hereafter a \defstyle{position} in an evaluation game will mean either a location of the form $(\pl,q,\varphi,T)$ or a configuration in the fully extended form described in the remark above. By this definition, at every position only one of the players (Abelard or Eloise) has a move to choose. Thus, the entire evaluation game---including transition games as subgames---is a turn-based game of perfect information. 

\techrep{
By \defstyle{game tree} $T_{\mathcal{G}}$ of an evaluation game $\mathcal{G}$, we mean the tree whose nodes   correspond to all positions arising in $\mathcal{G}$, and every branch of which corresponds to a possible play of $\mathcal{G}$ (including transition games as subgames). Note that some of these plays may be infinite, but only because an embedded transition game does not terminate, in which case a winner in the entire evaluation game is uniquely assigned according to the rules in 
Section \ref{subsec:unbounded}.}

The formal definitions of players' memory-based strategies in the evaluation games games are defined as expected, based on histories of positions. As usual, a strategy for a player $\pl$ is called  \defstyle{winning} if, following that strategy, $\pl$ is guaranteed to win regardless of how $\opl$ plays. 
A strategy is \defstyle{positional} if it depends only on the current position.
\techrep{We can also define strategies for transition games that arise within evaluation games; note that these are substrategies for the strategies in evaluation games. A strategy $\tau$ for a transition game is called winning for $\pl$ if
\begin{itemize}[itemsep=-2mm]
\item
every exit location that can be reached with $\tau$ is a winning location for $\pl$ in the evaluation game that continues from the exit location, and additionally,
\item in the alternative scenario where the transition game continues infinitely long while $\tau$ is followed (which is possible only in unbounded games), the player $\pl$ is \emph{\textbf{not}} the player who holds the (necessarily last) seeker's turn that lasts infinitely long.
\end{itemize}
}

\aamasp{\vspace{-0,2cm}}

\begin{definition}\label{def: GTS}
Let $\mathcal{M}$ be a \CGM, $q\in\St$, $\varphi\in\ATLplus$ and $\tlimbound$ an ordinal. 
\defstyle{Truth of $\varphi$ in the $\tlimbound$-bounded} 
$(\Vdash_\tlimbound)$, resp. \defstyle{unbounded  
$(\Vdash)$ \GTS} is defined as follows: 
\aamasp{\vspace{-0,1cm}}\begin{align*}
	&\text{$\mathcal{M},q\Vdash_\tlimbound\varphi$ 
		(resp. $\mathcal{M},q\Vdash\varphi$) iff Eloise has a positional} \\[-0,1cm]
	&\hspace{0,5cm}\text{winning strategy in $\mathcal{G}(\mathcal{M},q,\varphi,\tlimbound)$ 
		(resp. $\mathcal{G}(\mathcal{M},q,\varphi)$).}
\end{align*}
\end{definition}

\techrep{
We will show later that evaluation games are determined with positional strategies. Hence, if we allowed perfect-recall strategies in the truth definition above, we
would obtain equivalent semantics.
}

\aamasp{\vspace{-0,35cm}}

\begin{example} 
\label{ex:2} 
Consider the \CGM \ 
\techrep{	
$\mathcal{M} = (\Agt, \St, \Prop, \Act, d, o, v)$, where:  
\begin{align*}
	&\Agt = \{1,2\}, \; \St = \{q_0, q_1, q_2\}, \; \Prop = \{p_1,p_2\}, \; 
	\Act = \{\alpha, \beta\} \\
	& d(1,q_0)= d(2,q_1) = \{\alpha, \beta\}; \; d(a,q_i)=\{\alpha\}  \; 
	\mbox{in all other cases;} \\ 
	&o(q_0,\beta\alpha)=q_0, \; o(q_0,\alpha\alpha)= o(q_1,\alpha\beta)=q_1, \; 
		o(q_1,\alpha\alpha)=o(q_2,\alpha\alpha)=q_2 \\
	&v(p_1)=\{q_0\} \;\text{ and } v(p_2)=\{q_2\}.
\end{align*}
	}

\aamasp{\vspace{-2.1mm}}
\begin{center}
\begin{tikzpicture}[scale=1.5]
	\node at (-0.3,0) [draw, circle] (0) {$p_1$};
	\node at (1,0) [draw, circle] (1) {\phantom{$p_3$}};
	\node at (2.3,0) [draw, circle] (4) {$p_2$};
	\node at (-1.8,0) {$\mathcal{M}:$};
	\node at (-0.3,-0.45) {$q_0$};
	\node at (1.05,-0.45) {$q_1$};
	\node at (2.35,-0.45) {$q_2$};
	\node at (1.35,0.5) {\small$\alpha\beta$};
	\draw[-latex, loop left] (0) to node {\small $\beta\alpha$} (0);	
	\draw[-latex] (0) to node[below] {\small $\alpha\alpha$} (1);
	\draw[-latex, loop above] (1) to (1);		
	\draw[-latex] (1) to node[below] {\small $\alpha\alpha$} (4);
	\draw[-latex, loop right] (4) to node {\small $\alpha\alpha$} (4);
\end{tikzpicture}
\end{center}
\aamasp{\vspace{-2.5mm}}
\aamasp{
\runex{
\hspace{-0,23cm}
Here we have $\mathcal{M} = (\Agt, \St, \Prop, \Act, d, o, v)$, where  
$\Agt = \{a_1,a_2\}$, $\Prop = \{p_1,p_2\}$,
$\St = \{q_0, q_1, q_2\}$, $\Act = \{\alpha, \beta\}$, and 
the transition, outcome and valuation functions are defined as above.} 
}

\runex{ 
Let $\varphi:=\coop{a_2}(\atlg p_1 \vee\, \atlf p_2)$ (here $\atlg p_1 = \neg\atlf\neg p_1$). We describe a winning strategy for Eloise in the unbounded evaluation game $\evalgame(\mathcal{M},q_0,\varphi)$.
Eloise immediately ends her seeker's turn and does not make claims while being at $q_0$. If Abelard makes claims at $q_0$, she challenges those claims. If Abelard ends the transition game at $q_0$, Eloise wins the evaluation game by choosing $\neg\atlf\neg p_1$, as now the value of $\atlf\neg p_1$ is $\open$. 
Suppose that Abelard forces a transition to $q_1$ by choosing $\alpha$ for $a_1$. If he claims $\neg p_1$ is true at $q_1$, Eloise does not challenge. If Abelard ends his seeker turn at $q_1$, Eloise becomes the seeker. At $q_1$ she forces a transition to $q_2$, by choosing $\alpha$ for $a_2$. Then she verifies $\atlf p_2$ by claiming that $p_2$ is true at $q_2$.
If the transition game ends at $q_2$, she wins by choosing $\atlf p_2$, whose value is $\top$.
Note that by following this strategy, Eloise cannot stay as a seeker for infinitely long.
}
\end{example}

We will see later that there is never need for a larger than $|At(\Phi)|$ number of seeker alternations in a transition game for a formula $\coop{A}\Phi$. In Example~\ref{ex:2} we saw that there are cases where exactly $|At(\Phi)|$ seeker alternations are needed in the corresponding transition game. The following example generalizes the setting of Example~\ref{ex:2} by showing that no fixed upper bound for the number of seeker alternations suffices for all transitions games.

\begin{example}

Let $\varphi_k=\coop{a_2}\Psi_k$, where $\Psi_k:=\G r_0\vee\bigvee_{1\leq i\leq k}(\F p_i\wedge\G r_i)$. Consider the following $\CGM$ $\mathcal{M}$ (c.f. the model in Example~\ref{ex:2}).
\begin{center}
\begin{tikzpicture}[
	scale=1.5,
	state/.style={draw, rectangle, rounded corners, font=\small}
	]
	\node at (0,0) [state] (0) {$r_0,\dots, r_n$};
	\node at (1.5,0) [state] (1) {$r_1,\dots, r_n$};
	\node at (3.2,0) [state] (2) {$p_1,\,r_1,\dots, r_n$};
	\node at (4.8,0) [state] (3) {$r_2,\dots, r_n$};
	\node at (6.5,0) [state] (4) {$p_2,\,r_2,\dots, r_n$};
	\node at (5,-1.5) [state] (5) {$p_{n-1},r_{n-1},r_n$};
	\node at (3.4,-1.5) [state] (6) {$r_n$};
	\node at (2.1,-1.5) [state] (7) {$p_n,r_n$};
	\node at (0.7,-1.5) [state] (8) {\phantom{$p_n$}};
	\node[below=4pt of 0] {$q_0$};
	\node[below=4pt of 1] {$q_1$};
	\node[below=0pt of 2] {$q_1'$};
	\node[below=4pt of 3] {$q_2$};
	\node[below=0pt of 4] {$q_2'$};
	\node[below=0pt of 5] {$q_{n-1}'$};
	\node[below=4pt of 6] {$q_n$};
	\node[below=0pt of 7] {$q_n'$};
	\node[below=4pt of 8] {$q_{fin}$};
	\draw[-latex, loop above] (0) to node {\footnotesize $\beta\alpha$} (0);	
	\draw[-latex] (0) to node[below] {\footnotesize $\alpha\alpha$} (1);
	\draw[-latex, loop above] (1) to node {\footnotesize $\alpha\beta$} (1);	
	\draw[-latex] (1) to node[below] {\footnotesize $\alpha\alpha$} (2);
	\draw[-latex, loop above] (2) to node {\footnotesize $\beta\alpha$} (2);
	\draw[-latex] (2) to node[below] {\footnotesize $\alpha\alpha$} (3);
	\draw[-latex, loop above] (3) to node {\footnotesize $\alpha\beta$} (3);
	\draw[-latex] (3) to node[below] {\footnotesize $\alpha\alpha$} (4);
	\draw[-latex, loop above] (4) to node {\footnotesize $\beta\alpha$} (4);
	\draw[-latex, dashed] (4) to (5);
	\draw[-latex, loop above] (5) to node {\footnotesize $\beta\alpha$} (5);
	\draw[-latex] (5) to node[below] {\footnotesize $\alpha\alpha$} (6);
	\draw[-latex, loop above] (6) to node {\footnotesize $\alpha\beta$} (6);
	\draw[-latex] (6) to node[below] {\footnotesize $\alpha\alpha$} (7);
	\draw[-latex, loop above] (7) to node {\footnotesize $\beta\alpha$} (7);
	\draw[-latex] (7) to node[below] {\footnotesize $\alpha\alpha$} (8);
	\draw[-latex, loop left] (8) to node {\footnotesize $\alpha\alpha$} (8);
\end{tikzpicture}
\end{center}

At $q_0$ Eloise wants to end her seeker turn immediately as $\G r_0$ ``still'' true. When Abelard becomes the seeker, he wants to make a transition to $q_1$ and falsify $\G r_0$ there. Since Abelard has then no reason to continue as a seeker, he gives the seeker turn to Eloise. Now Eloise wants to make a transition to $q_1'$ in order to verify $\F p_1$; since $\G r_1$ is still true, Eloise has then no reason to continue as a seeker. We may suppose that the transition game continues like this, so that the seeker role is swapped \emph{after every transition} and $\F p_i$ are verified while $\G r_i$ are falsified. When Abelard finally becomes the seeker at $q_n'$, the maximum number of $|At(\Psi_k)|=2k+1$ seeker alternations has been used. Then Abelard makes a transition to $q_n'$, falsifies $\G r_n$ and wins the ``boolean game'' for $\Psi_k$ with the values of the (fully updated) truth function.
\end{example}


\section{Analysing evaluation games}
\label{sec:results}

In this section we will analyse the properties of the evaluation games of \ATLplus. We first prove positional determinacy of both 
bounded and unbounded evaluation games. Then we find so-called stable timer bounds for bounded evaluation games and show that with them, the bounded \GTS becomes equivalent to the unbounded \GTS. Finally we present the notion of a regular strategy which will be needed for proving the equivalence of \GTS and the standard compositional semantics of \ATLplus in the next section.


\subsection{Positional determinacy}

Here we prove positional determinacy of both bounded and unbounded evaluation games. Recall here that positions are either locations in evaluation games or configurations in transition games---in the extended sense which was discussed in Remark~\ref{remark}.

\begin{proposition}\label{the: bounded determinacy}
Bounded evaluation games 
are determined and the winner has a positional winning strategy.\hspace{-4mm}
\end{proposition}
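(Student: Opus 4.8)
The plan is to treat the bounded evaluation game as a perfect-information game on a well-founded tree and to obtain positional determinacy by a backward (transfinite) induction on that tree. Recall that, by the discussion preceding the proposition, every position of the game is either a location $(\pl,q,\psi,T)$ or a fully extended configuration, that exactly one player moves at each position, and that the rules governing the successors of a position---together with the identity of the winner at every ending location---refer only to the position itself and never to the history that led to it. Consequently the subgame rooted at a position $P$ depends only on $P$; this is the feature that will let the induction assign a winner to each position label uniformly, rather than separately to each individual node of the game tree.

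The crucial step is to show that every play of a bounded evaluation game is finite. I would argue this by three nested termination observations. First, within a single seeker turn the timer is a strictly decreasing sequence of ordinals below $\tlimbound$: in phase \defstyle{ii} a seeker who continues must pass to some $\gamma'<\gamma$ before re-entering phase \defstyle{iii}, and since the ordinals are well-founded, no seeker turn can last infinitely many rounds. Second, each alternation of the seeker role decrements the seeker turn counter $n$ (phase \defstyle{ii}, case a), and since $n$ starts at $|\mathit{At}(\Phi)|<\omega$ and the transition game terminates once $n=0$ (case b), every transition game contains only finitely many seeker turns, hence finitely many rounds; each round moreover processes the finite set $\mathit{At}(\Phi)$ once. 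Third, a transition game can spawn a further transition game only through a challenge, whose exit location $(\vrf,q,\psi,\emptyset)$ carries a proper subformula $\psi$ of $\coop{A}\Phi$; thus the nesting of transition games is bounded by the number of nested strategic operators in $\varphi$. When a transition game instead terminates normally at $(\vrf,q,\Phi,T)$ with $T\neq\emptyset$, the ensuing Boolean evaluation of $\Phi$ only reduces formula size (rules \defstyle{2}, \defstyle{3}) and reaches an ending location at every relative atom (rules \defstyle{1}, \defstyle{4}--\defstyle{6}, all with $T\neq\emptyset$), so no new transition game is entered. Combining these facts, every play is a finite composition of finitely many finite transition games and finite Boolean games.

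Because every play is finite, the game tree has no infinite branch and is therefore well-founded; note that it may be infinitely branching, since a seeker may choose any $\gamma'<\gamma$, but this causes no difficulty for transfinite induction. I would then define, by induction over this well-founded tree, the set of positions from which a given player wins: an ending location is won by the player the rules declare victorious there, and a non-ending position $P$ at which player $\pl$ moves is won by $\pl$ exactly when some successor of $P$ is won by $\pl$, equivalently won by $\opl$ exactly when all successors of $P$ are won by $\opl$. Since the subgame at $P$ depends only on $P$, this assigns a winner to each position independently of the history, and exactly one player wins from each position; in particular exactly one player wins from the initial location, which yields determinacy.

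Finally, the winning player's positional strategy reads off immediately: at any position $P$ that she wins and at which she moves, she plays to a successor that she also wins---such a successor exists by the inductive characterisation---while at positions won by her at which the opponent moves, every successor is again winning for her. This prescription depends only on the current position, so it is positional; and since every play is finite and stays inside her winning region, it must terminate at an ending location declared in her favour. The main obstacle is the finiteness argument of the second paragraph, which requires carefully tracking how the timer, the seeker turn counter, and the decreasing formula complexity jointly rule out infinite plays; once finiteness is in hand, determinacy and positionality follow by the standard well-founded induction.
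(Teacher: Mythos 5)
Your proof is correct and follows essentially the same route as the paper's own (sketched) argument: establish that the game tree is well-founded because the ordinal timers must strictly decrease within a seeker turn, the seeker turn counter and the formula nesting are finite, and then obtain positional determinacy by backward (transfinite) induction on the well-founded tree. Your version simply fills in the termination details that the paper leaves implicit.
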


\begin{proof}
(Sketch) 
Since ordinals are well-founded and they must decrease during transition games, it is easy to see that the game tree is well-founded. Thus positional determinacy
follows easlily, essentially by backward induction.
\end{proof}


\begin{proposition}\label{the: unbounded determinacy}
Unbounded evaluation games 
are determined and the winner has a positional winning strategy.
\end{proposition}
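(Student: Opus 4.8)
The plan is to recast the unbounded evaluation game $\evalgame(\mathcal{M},q,\varphi)$ as a two-player, turn-based, perfect-information graph game and then to read off the statement from the classical positional determinacy of parity (B\"uchi) games over arbitrary arenas. As already noted before Definition~\ref{def: GTS}, at every position of $\evalgame(\mathcal{M},q,\varphi)$ exactly one of $\mathbf{E},\mathbf{A}$ has a move; hence the positions (locations and configurations, in the fully extended form of Remark~\ref{remark}) are the vertices of an arena whose edges are the available moves, and a strategy depending only on the current position is \emph{exactly} a positional strategy in this arena. To make every play infinite---so that a single acceptance condition decides the winner---I would first turn each ending location into an absorbing sink carrying a self-loop.

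Next I would equip this arena with a B\"uchi-style labelling using only two priorities, $1$ and $2$, chosen so as to encode the winning rules faithfully. A sink arising from an ending location won by $\mathbf{E}$ is given the even priority $2$, and one won by $\mathbf{A}$ the odd priority $1$; this reproduces rules \textbf{1},\textbf{4},\textbf{5},\textbf{6} for all finite plays. For the genuinely infinite plays I would exploit the key structural fact that such a play is trapped inside a single transition game: an exit (by a challenge, or by the seeker ending its turn and the opponent then ending the game) terminates a transition game forever, and the number of seeker alternations is bounded by $|\mathit{At}(\Phi)|$, so an infinite play must consist, from some point on, of infinitely many step phases carried out by one fixed \emph{permanent} seeker. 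I would therefore assign priority $1$ to every phase-configuration in which $\mathbf{E}$ is the seeker and priority $2$ to those in which $\mathbf{A}$ is the seeker. Since the finitely many alternations, and the finite segments lying between successive transition games, contribute only finitely often, the priority occurring infinitely often on an infinite play is decided by the permanent seeker: it is $1$ when $\mathbf{E}$ is permanent seeker and $2$ when $\mathbf{A}$ is. Declaring $\mathbf{E}$ to win exactly when the largest priority seen infinitely often is even then yields precisely the rule ``the player who took the last seeker turn loses'' from Section~\ref{subsec:unbounded}.

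Finally I would invoke the classical theorem that parity games with finitely many priorities are positionally determined over \emph{arbitrary} arenas, in particular infinite and infinitely branching ones; the two-priority (B\"uchi/co-B\"uchi) instance built above is a special case, so both players have positional winning strategies on their respective winning regions. Because a positional strategy in this arena is by construction a positional strategy of $\evalgame(\mathcal{M},q,\varphi)$, and the acceptance condition was designed to agree with the winning rules on every play, it follows that $\evalgame(\mathcal{M},q,\varphi)$ is determined and the winner has a positional winning strategy. Note that, unlike in Proposition~\ref{the: bounded determinacy}, backward induction is unavailable once timers are dropped, since the game tree is no longer well-founded---this is exactly why the parity-game argument is needed.

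The main obstacle I anticipate is the bookkeeping required to check that the priority labelling is faithful on \emph{every} play: one must verify rigorously that an infinite play is confined to one transition game with a uniquely determined permanent seeker, and that neither the bounded seeker alternations nor the finite segments between transition games can ever supply the infinitely-recurring priority. Once this confinement lemma is established, the reduction to B\"uchi games and the appeal to positional determinacy are routine.
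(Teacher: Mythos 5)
Your proposal is correct and follows essentially the same route as the paper's own proof: both recast the unbounded evaluation game as a two-priority (B\"uchi/co-B\"uchi) game on the arena of positions, turn ending locations into absorbing self-loops, use the bound on seeker alternations to argue that every infinite play is eventually confined to a single transition game with a fixed permanent seeker (so the acceptance condition correctly implements ``the last seeker loses''), and then invoke positional determinacy of such games on arbitrary, possibly infinite arenas. The paper phrases the winning condition as a co-B\"uchi objective for Eloise rather than a max-parity condition, but given the confinement fact these are equivalent, so the two arguments coincide.
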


\aamasp{\vspace{-3mm}}

\aamasp{
\begin{proof}
(Sketch)
This claim can be proved 
in a similar way as Gale-Stewart theorem.
%
Another way to prove the claim is to
show that unbounded evaluation games are essentially B\"{u}chi-games
(see, e.g., \cite{Mazala01} for B\"{u}chi-games). 

The details of the proof via B\"{u}chi-games are in \cite{GTS-ATLplus2017-techrep},

but the principal idea is to set up a B\"{u}chi condition such that Eloise wins the B\"{u}chi game if the set of positions visited infinitely often is included in the union of configurations of the transition games where Abelard is the seeker and positions of the evaluation game where Eloise has already won.
\end{proof}
} 
\techrep{ 
\begin{proof}
We will show that unbounded evaluation games are essentially B\"{u}chi-games
(see, e.g., \cite{Mazala01}). 
We first discuss the case where the underlying \CGM $\mathcal{M}$ is finite.
We follow the technicalities for B\"{u}chi-games from \cite{CHP06}, which gives an
excellently detailed and to-the-point presentation of the related basic notions.
Take a triple $(\mathcal{M},q,\varphi)$,
where $\mathcal{M}$ is a finite \CGM, $q$ a state of $\mathcal{M}$, 
and $\varphi$ a formula of $\ATL$.
We will convert this triple into a B\"{u}chi game BG such 
that $\mathcal{M},q\models\varphi$ iff player $2$
has a winning strategy in BG from a certain
position of BG determined by the state $q$.
The required B\"{u}chi game  BG corresponds almost exactly to
the unbounded evaluation game $\mathcal{G}(\mathcal{M},q,\varphi)$.
The set of states of BG is the finite set of positions in  
$\mathcal{G}(\mathcal{M},q,\varphi)$.
The states of BG assigned to player 1 (resp., player 2) of BG are
the positions where Abelard (resp., Eloise) is to move.
The edges of the binary transition relation $E$ of BG
correspond to the  changes of positions in $\mathcal{G}(\mathcal{M},q,\varphi)$.
%
%
%
%
Also, $E$ is defined such that ending locations in the
evaluation game connect (only) to themselves via $E$.
This ensures that every state of BG has a successor state.
We set a \emph{co-B\"{u}chi-objective}
such that an infinite play of BG is
winning for player 2 iff the set of states visited 
infinitely often is a subset of the union of the following sets of states of BG:
\begin{enumerate}
\item
States of BG corresponding to
configurations of the transition games where Abelard is the seeker.
\item
States  of BG corresponding to such ending locations in 
the game $\evalgame(\mathcal{M},q,\varphi)$
where Eloise has already won.  
\end{enumerate}
Clearly, Eloise (resp., Abelard) has a 
positional winning strategy in the evaluation game
starting at a position $\mathit{pos}$ of 
the evaluation game iff player 2  (resp., player 1) in BG has a
positional winning strategy from the state of BG
corresponding to $\mathit{pos}$.
Finite B\"{u}chi games enjoy positional 
determinacy (see e.g. 
\cite{CHP06}), which completes the case of finite $\CGM{s}$. 
For infinite $\CGM{s}$, the argument is the same but requires positional determinacy of B\"{u}chi games on infinite game graphs. That fact is well-known and follows easily from Theorem 4.3 of \cite{grade}.
\end{proof}
} 
%
%
%
%
\vcut{
\begin{proof}
Let $\mathcal{G}(\mathcal{M},\qzero,\varphi)$ be an unbounded evaluation game. 
Note that positions of an evaluation game form a tree of a finite depth.
We prove the claim by induction on the \emph{positions} of $\mathcal{G}(\mathcal{M},\qzero,\varphi)$ in that tree. The only nontrivial case is when a position leads to an unbounded transition game $\defstyle{g}(\pl,q_0,\coop{A}\Phi)$. We make the inductive hypothesis that every possible exit position of $\defstyle{g}(\pl,q_0,\coop{A}\Phi)$ is a winning position for either of the players.

We now do inner induction proof on the number $n=\mathit{At}(\Phi)$ and show that any configuration in $\defstyle{g}(\pl,q_0,\coop{A}\Phi)$ that is of the form $c=(\pl,\ctr,q,T,n,x)$ is a winning configuration for either of the players. We suppose that $c$ is not a winning configuration for $\ctr$. Now it is easy to see that $\octr$ can play in such a way that the next configuration $c'$ is not a winning position for $\ctr$. By induction on the length of the transition game, we can show that $\octr$ has such a strategy $\tau$, that when (s)he follows $\tau$, one of the following holds:
\begin{enumerate}
\item The transition game ends at an exit position that is not a winning position for $\ctr$.
\item $\ctr$ decides to stop being a seeker at some configuration that is not a winning position for $\ctr$.
\item The transition game lasts for infinite number of rounds.
\end{enumerate}
If 1 holds, then by the (outer) inductive hypothesis, the exit position of the game is a winning position for $\octr$. If 2 holds, then by the (inner) inductive hypothesis, the next configuration is a winning position for $\octr$. And if 3 holds, then $\octr$ wins since $\ctr$ was the seeker. Hence the initial configuration $c$ is a winning position for $\octr$.
\end{proof}
}

\aamasp{\vspace{-1mm}}

By the positional determinacy, we have the following consequence: If Eloise (Abelard) has a perfect recall strategy in a bounded or unbounded evaluation game (or transition game), then she (he) has a positional winning strategy in that game.
%


\subsection{Finding stable timer bounds}

In this section study which timer bounds are ``stable'' for a given model. Intuitively this means that a timer bound $\tlimbound$ is stable for a model $\mathcal{M}$ if neither of the players can benefit from announcing timers that are higher than (or equal to) $\tlimbound$. We will see that, by finding stable timer bounds, we can make the bounded \GTS equivalent to the unbounded \GTS. Moreover, the identification of stable timer bounds for finite models will be necessary for our model checking proofs in Section~\ref{sec:MC}. 

We next consider a ``semi-bounded'' variant of the transition game in which one player must use timers when being the seeker and the other is allowed to play without timers.
A timer bound $\tlimbound$ is \defstyle{stable} for an unbounded transition game $\defstyle{g}(\vrf,q_0,\coop{A}\Phi)$ if the player with a winning strategy in $\defstyle{g}(\vrf,q_0,\coop{A}\Phi)$ can, in fact, win using timers below $\tlimbound$.

\techrep{We first identify stable timer bounds for \emph{finite} models.}
\begin{proposition}
\label{prop:limit}
Let $\mathcal{M}$ be a finite \CGM, $q_0\in\St$ a state and $\Phi\in\ATLplus$ a
path formula. Then $k:=|\St|\cdot|\mathit{At}(\Phi)|$ is a
stable timer bound for $\defstyle{g}(\vrf,q_0,\coop{A}\Phi)$.
\end{proposition}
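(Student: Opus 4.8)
The plan is to reduce the statement to a counting argument on the configurations visited during a single seeker turn, using positional determinacy together with the monotonicity of the truth function. First, by Proposition~\ref{the: unbounded determinacy} (and the generalization of Claim~\ref{the: regular strategies} to both players, indicated in its statement), I fix for the player $P$ who wins the unbounded transition game $\defstyle{g}(\vrf,q_0,\coop{A}\Phi)$ a positional, regular winning strategy $\sigma$. Recall that the actions and the phase-\defstyle{ii} continue/stop decisions prescribed by a regular strategy depend only on the pair $(q,T)$ of current state and current truth function, and that $\sigma$ tells $P$ to stop being the seeker as soon as $T$ already has winning values for $P$. I will show that, playing $\sigma$, every seeker turn of $P$ lasts at most $|\St|\cdot|\mathit{At}(\Phi)|$ rounds; then $P$ can win the semi-bounded game by announcing, at the start of each of its own seeker turns, a timer below $\tlimbound=k$, while the opponent plays without a timer. (When the opponent is the seeker, $P$ simply continues to follow $\sigma$; an infinite opponent seeker turn makes the opponent lose by the rule for infinite plays, so this is harmless for $P$.)

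The core is a no-repetition observation. Within one seeker turn the value of $T$ can only be refined: once an atom of $\mathit{At}(\Phi)$ is set to $\top$ or $\bot$ it is never changed again, so $T$ ascends a chain and is refined at most $|\mathit{At}(\Phi)|$ times during the turn. Now fix a maximal stretch of the turn on which $T$ is constant, and suppose, for contradiction, that a state $q$ recurs on it while $P$ follows $\sigma$, i.e. the configuration $(q,T)$ occurs at two rounds $t_1<t_2$ with $T$ unchanged in between and $P$ the seeker throughout. Since $\sigma$ is positional, $P$ makes the identical phase-\defstyle{ii} decision (continue) and the identical choice of actions at both visits; the opponent may therefore replay the moves it used on the segment $[t_1,t_2]$ to return to $(q,T)$ once more, and iterating this yields an infinite play on which $T$ never becomes winning and $P$ remains the seeker forever. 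By the infinite-play rule $P$ loses such a play, contradicting that $\sigma$ is winning. Hence each constant-$T$ stretch visits at most $|\St|$ distinct states, and so contains at most $|\St|$ step phases.

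Combining the two bounds, a seeker turn of $P$ decomposes into the constant-$T$ stretches preceding the successive refinements of $T$; there are at most $|\mathit{At}(\Phi)|$ such refinements, and by regularity $P$ stops the turn at the refinement that first makes $T$ winning, so no further navigation follows it. Each preceding stretch contributes at most $|\St|$ rounds, for a total of at most $k=|\St|\cdot|\mathit{At}(\Phi)|$. Consequently $P$ can complete each of its seeker turns within a timer below $k$, which shows that $k$ is a stable timer bound. I expect the main obstacle to be exactly the no-repetition step of the second paragraph: it is where positionality of $\sigma$ and the unbounded-play rule (the last seeker on an infinite play loses) must be combined to rule out that the opponent forces $P$ into an unproductive, arbitrarily long detour. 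The remaining counting, including the exact treatment of the boundary value of the timer ordinal, is routine.
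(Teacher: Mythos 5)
Your proposal is correct and follows essentially the same route as the paper's own (sketched) argument: bound each seeker turn by noting that the truth function is refined at most $|\mathit{At}(\Phi)|$ times and that between refinements no state need be revisited, giving at most $|\St|$ rounds per constant-$T$ stretch. Your pumping argument combining positionality/regularity of the winning strategy with the infinite-play rule is a rigorous rendering of the paper's informal remark that ``it is not beneficial for Eloise to go in loops such that $T$ is not updated.''
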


\begin{proof}
We give a detailed sketch of proof.
Let $c=({\bf E},q,T,n,x)$ be a configuration
(for an \emph{unbounded} game, so
no timer is listed). Suppose that exit location $(\vrf,q,\Phi,T)$ is not a
winning location for Eloise. Then she wants to stay as the
seeker until the truth function is modified to $T'$ that makes $\Phi$ true.
Since $T$ is updated state-wise, it is not beneficial for Eloise to go in loops
such that $T$ is not updated. Hence, if Eloise has a winning strategy from $c$, then she has a winning strategy in which $T$ is updated at least once every $|\St|$ rounds. Since $T$ can be updated at most $|\mathit{At}(\Phi)|$ times, we see that a timer greater than $k =|\St|\cdot|\mathit{At}(\Phi)|$ is not needed.
\end{proof}

\aamasp{
\vspace{-4mm}
}

\begin{corollary}
If $\mathcal{M}$ is a finite \CGM, the unbounded \GTS is equivalent on $\mathcal{M}$ to the $(|\St|\cdot|\varphi|)$-bounded \GTS.
\end{corollary}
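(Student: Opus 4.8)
The plan is to obtain the corollary from Proposition~\ref{prop:limit} together with the positional determinacy of bounded and unbounded evaluation games (Propositions~\ref{the: bounded determinacy} and~\ref{the: unbounded determinacy}). Set $k:=|\St|\cdot|\varphi|$. The first step is to check that $k$ is a \emph{uniform} stable timer bound for \emph{every} transition game $\defstyle{g}(\vrf,q',\coop{A}\Phi)$ that can occur inside $\evalgame(\mathcal{M},q,\varphi)$. Indeed, each such $\coop{A}\Phi$ is a subformula of $\varphi$, and distinct relative atoms in $\mathit{At}(\Phi)$ are distinct subformulae of $\Phi$, so $|\mathit{At}(\Phi)|\le|\Phi|\le|\varphi|$ and hence $|\St|\cdot|\mathit{At}(\Phi)|\le k$. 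By Proposition~\ref{prop:limit} the value $|\St|\cdot|\mathit{At}(\Phi)|$ is stable, and any timer bound above a stable one is again stable (the winner's timer-bounded strategy still uses timers below the larger bound); thus $k$ is stable for each of these transition games.

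Since the unbounded game $\evalgame(\mathcal{M},q,\varphi)$ and the $k$-bounded game $\evalgame(\mathcal{M},q,\varphi,k)$ are both determined, it suffices to prove the single statement: \emph{the winner of the unbounded game also wins the $k$-bounded game.} Applying this once with the winner being Eloise and once with the winner being Abelard, and combining with determinacy of both games, immediately yields that Eloise wins the $k$-bounded game iff she wins the unbounded one, which is exactly the asserted equivalence of $\Vdash_k$ and $\Vdash$ on $\mathcal{M}$.

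To prove that statement I would induct on $\varphi$ (equivalently, descend the well-founded game tree, treating each transition game as a single step). The rules~\textbf{1}--\textbf{3},~\textbf{5} and~\textbf{6} are timer-free and either decide the play outright or pass to a strict subformula, so the inductive hypothesis applies verbatim; the only substantial case is rule~\textbf{4}. When the reached location $(\pl,q',\coop{A}\Phi,T)$ has $T=\emptyset$, the game enters the transition game $\defstyle{g}(\pl,q',\coop{A}\Phi)$, and the overall winner---being winning at this position---wins this transition game in the unbounded sense. By stability of $k$ the winner can force a winning exit using timers below $k$ \emph{against an unrestricted opponent}. The crucial observation is that imposing the bound $k$ on the opponent as well only \emph{shrinks} the opponent's set of legal plays: every play of the fully $k$-bounded transition game is also a legal play of the semi-bounded game in which only the winner is timer-bounded. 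Hence the winner's timer-bounded strategy stays winning in the $k$-bounded game and forces an exit location that is winning for the winner in the continuation. That continuation either evaluates a strictly smaller formula (with empty truth function, possibly through a smaller transition game, for which $k$ is still stable) or is a finite, timer-free Boolean game on $\Phi$ governed by the final $T$; in either case the inductive hypothesis gives that the winner also wins the continuing $k$-bounded game, and composing the pieces produces a $k$-bounded winning strategy.

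The step I expect to be the main obstacle is precisely this composition in the rule~\textbf{4} case: one must argue that the per-transition-game stability of Proposition~\ref{prop:limit} glues correctly across the sequential and nested occurrences of transition games within the whole evaluation game, and, in particular, that additionally bounding the opponent's timers can never convert a losing position into a winning one for that opponent. The ``opponent-restriction'' observation above is what makes this safe, since it lets the winner's semi-bounded strategy be transferred unchanged into the fully $k$-bounded game; the well-founded induction on subformulae then closes the argument.
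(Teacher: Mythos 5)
Your proposal is correct and follows essentially the same route the paper takes (the paper leaves this corollary unproved, but its proof of the analogous Corollary~\ref{the: bounded vs. unbounded} is exactly your two-directional argument: by Proposition~\ref{prop:limit} the winner of the unbounded game can win using timers below the bound, restricting the opponent's timers only helps the winner, and determinacy of both games closes the equivalence). Your additional care in checking that $|\mathit{At}(\Phi)|\le|\varphi|$ uniformly over strategic subformulae and in gluing the per-transition-game stability across the whole evaluation game by induction on $\varphi$ is a correct elaboration of what the paper treats as immediate.
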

\aamasp{\vspace{-0,1cm}}
In order to find stable timer bounds for infinite models, we give the following definition (cf. Def~4.12 in \cite{GKR-AAMAS2016}).
\aamasp{\vspace{-0,1cm}}
\begin{definition}\label{def: branching}
Let $\mathcal{M}$ be a \CGM and let $q\in\St$. The \defstyle{branching degree of $q$}, $\bd(q)$, is the cardinality of the set of outcome states from $q$:
$\bd(q):=\card(\{o(q,\vec\alpha)\mid \vec\alpha\in\avact(\Agt,q)\})$.
The \defstyle{regular branching bound of $\mathcal{M}$},
or $\rbb(\mathcal{M})$, is the smallest infinite regular cardinal $\kappa$
such that $\kappa > \bd(q)$ for every $q\in\St$.
Note that $\rbb(\mathcal{M})=\omega$ if and only if $\mathcal{M}$ is \defstyle{image-finite}.
\end{definition}

If $c=(\ctr,q,T,n,x)$ is a configuration in an unbounded transition game and $\gamma$ is an ordinal,
we use the notation $c[\gamma]:=(\ctr,q,T,n,\gamma,x)$.

\begin{proposition}
\label{prop:RBB} 
Let $\mathcal{M}$ be a \CGM, $q_0\in\St$ and $\Phi\in\ATLplus$ a path formula. Then $\rbb(\mathcal{M})$ is a stable timer bound for $\defstyle{g}(\vrf,q_0,\coop{A}\Phi)$. 
\end{proposition}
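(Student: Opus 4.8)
The plan is to reduce stability of the bound $\rbb(\mathcal{M})$ to a statement about the ordinal \emph{rank} of reachability games on $\mathcal{M}$, whose branching is controlled by $\rbb(\mathcal{M})$. By Proposition~\ref{the: unbounded determinacy} the unbounded transition game $\defstyle{g}(\vrf,q_0,\coop{A}\Phi)$ is positionally determined; fix the player $P$ who has a winning strategy and consider the \emph{seeker turns} that $P$ takes along a winning strategy $\sigma$. As in the construction of regular strategies (Claim~\ref{the: regular strategies}, in its version for both players), I would assume that $\sigma$ has $P$ take a seeker turn only in order to force the truth function $T$ to be updated in $P$'s favour, i.e.\ to reach a state where $P$ can make a claim in phase~\defstyle{i} that the opponent cannot successfully challenge. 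Reaching such a state is a two-player reachability game on $\mathcal{M}$: one coalition ($A$ or $\overline{A}$, according to whether $P$ is the verifier or the falsifier of the transition game) is moved by $P$ and the other by the opponent in each step phase, and from any state $q$ the number of possible successor states is at most $\bd(q)$. I would then show that $P$ can announce, at the start of each of its seeker turns, an ordinal timer $<\rbb(\mathcal{M})$ dominating every adversarial continuation of that turn, which is exactly what stability asks for.

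The technical heart is the rank computation. Write $\kappa:=\rbb(\mathcal{M})$ and recall from Definition~\ref{def: branching} that $\kappa$ is an infinite \emph{regular} cardinal with $\bd(q)<\kappa$ for every $q\in\St$. For a fixed reachability target (the set of states at which $P$ can legitimately update $T$ in its favour) define the controllable-predecessor approximants $W_0\subseteq W_1\subseteq\cdots$ in the usual way: $W_0$ is the target set; $W_{\alpha+1}$ is the set of states from which $P$ can choose actions for its coalition so that, whatever the opponent chooses, the resulting successor lies in $\bigcup_{\beta\le\alpha}W_\beta$; and $W_\lambda=\bigcup_{\beta<\lambda}W_\beta$ at limits. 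I would prove by transfinite induction that this sequence stabilises by stage $\kappa$: a state $q$ has at most $\bd(q)<\kappa$ distinct successors, so if all of them already lie in $\bigcup_{\beta<\kappa}W_\beta$ then, by regularity of $\kappa$, they lie in a single $W_{\beta_0}$ with $\beta_0<\kappa$, whence $q\in W_{\beta_0+1}$. Consequently every state from which $P$ can force an update of $T$ receives a rank $\rho(q)<\kappa$, and the rank-decreasing strategy lets $P$ announce the timer $\rho(q)$ and strictly lower it at each round until the update occurs.

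It remains to account for the fact that a single seeker turn may require several successive updates of $T$ (up to $|\mathit{At}(\Phi)|$ of them), since the shared seeker-turn counter forces $P$ to accomplish more than one update per turn in general. Here I would concatenate the reachability phases: the ordinal $P$ commits to at the start of a turn is the ordinal sum of the finitely many reachability ranks involved (together with the finitely many extra rounds used for the claim subphases). Because $\kappa$ is an infinite cardinal it is closed under addition of finitely many ordinals each $<\kappa$ (any such sum has cardinality $<\kappa$, hence is an ordinal $<\kappa$), so the committed timer stays $<\kappa=\rbb(\mathcal{M})$. Since the opponent's challenges only terminate the transition game at exit locations already winning for $P$, and $P$'s own count-down reaches $0$ only after all required updates have been made, $P$ wins the semi-bounded game using timers below $\rbb(\mathcal{M})$, which is stability.

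The step I expect to be the main obstacle is formulating the reachability target and the rank function so that they faithfully mirror the phase-\defstyle{i}/phase-\defstyle{iii} structure of the transition game: the seeker does not merely move through the graph but alternates step phases with claim phases in which the opponent may challenge, so the target must be defined relative to $P$'s (regular) winning strategy rather than intrinsically, and the two-player nature of each step phase must be built into the controllable-predecessor operator. Once this is set up correctly, the regularity argument of the second paragraph applies verbatim and the additive-closure argument of the third paragraph assembles the per-turn bound. Finally, if $\mathcal{M}$ is literally finite then $\rbb(\mathcal{M})=\omega$ and the statement is in any case immediate from Proposition~\ref{prop:limit}, since the finite stable bound $|\St|\cdot|\mathit{At}(\Phi)|$ is itself $<\omega$; the argument above is what is genuinely needed once infinite branching is allowed.
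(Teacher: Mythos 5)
Your core idea is the right one and matches the paper's: assign ordinal ranks below $\kappa:=\rbb(\mathcal{M})$ to the positions a seeker must pass through, and use regularity of $\kappa$ to keep the suprema below $\kappa$. The paper, however, organizes this more simply: it fixes the winner's strategy $\tau$, takes the well-founded tree of \emph{all configurations of one entire seeker turn} played according to $\tau$ (well-founded because $\tau$ is winning, so the turn must end), and assigns to each node the successor of the supremum of its children's ordinals; since each node has fewer than $\kappa$ children (branching in the model is $\bd(q)<\kappa$, plus finitely many sub-moves in phases \defstyle{i} and \defstyle{ii}) and $\kappa$ is regular, every node, in particular the root, gets an ordinal $<\kappa$, which is the timer to announce. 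This single induction absorbs the phase-\defstyle{i}/phase-\defstyle{iii} alternation and any number of intermediate $T$-updates for free, so neither your attractor formalism nor your concatenation step is needed.

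The one place where your write-up has an actual gap is the concatenation in the third paragraph: you let $P$ announce ``the ordinal sum of the finitely many reachability ranks involved,'' but the later summands are not determined at announcement time --- the rank of the second reachability phase depends on \emph{which} state the first phase ends at, and the adversary influences that. To repair this you must replace each later summand by a supremum of ranks over all states reachable at the corresponding phase boundary, and then argue that this set of states has cardinality $<\kappa$ (it is the leaf set of a well-founded tree with branching $<\kappa$, hence of size $<\kappa$ by regularity), so the supremum, and then the finite ordinal sum, stays below $\kappa$. This is fixable but adds a cardinality argument the paper's single-induction proof never needs. Your closing observation that the word ``finite'' in the statement makes the proposition trivially subsumed by Proposition~\ref{prop:limit}, and that the content lies in the infinitely-branching case, is correct and worth making.
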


\begin{proof}
Suppose first that Eloise has a winning strategy $\tau$ in $\defstyle{g}(\mathcal{M},q_0,\coop{A}\Phi)$. Let $c$ be any configuration of the form $c=(\pl,{\bf A},q,T,n,\defstyle{ii})$ such that
\begin{itemize}
\item $c$ can be reached with $\tau$.
\item If Abelard decides to quit seeking at $c$, then $\tau$ instructs Eloise to become seeker.
\end{itemize}
We need to find an ordinal $\gamma_0<\rbb(\mathcal{M})$ for Eloise to announce if she needs to become seeker at $c$ and supplement $\tau$ with instructions on lowering the ordinal after every transition while she is a seeker. We will use the instructions given by $\tau$ for verifications and choices for actions.

Suppose that Abelard quits seeking at $c$. Let $T_{\defstyle{g},c}$ be the tree that is formed by all of those paths of confiqurations, starting from $c$, in which Eloise stays as the seeker and plays according to $\tau$. Since $\tau$ is a winning strategy, every path in $T_{\defstyle{g},c}$ must be finite, and thus $T_{\defstyle{g},c}$ is well-founded. We prove the following claim by well-founded induction on $T_{\defstyle{g},c}$:
\begin{align*}
	\text{For every } c'\in T_{\defstyle{g},c}, \text{ there is an ordinal } \gamma<\rbb(\mathcal{M}) \\
	\text{ s.t. } c'[\gamma] \text{ is a winning position for Eloise}. 
\end{align*}
We choose $\gamma=0$ for every leaf on $T_{\defstyle{g},c}$. Suppose then that $c'$ is not a leaf. By the inductive hypothesis, the claim holds for every configuration that can be reach with a transition from $c'$. We now define $\gamma$ to be the \emph{successor of the supremum} of these ordinals. Since $\rbb(\mathcal{M})$ is regular, we have $\gamma<\rbb(\mathcal{M})$.
Then, there is $\gamma_0<\rbb(\mathcal{M})$ such that $c[\gamma_0]$ is a winning configuration for Eloise. 
\end{proof}

By using Proposition~\ref{prop:RBB}, it is now easy to show that when the regular branching bound of the given model is used as a timer bound $\tlimbound$, then the $\tlimbound$-bounded \GTS becomes equivalent to the unbounded \GTS. 

\begin{corollary}\label{the: bounded vs. unbounded}
Suppose that $\tlimbound\geq\rbb(\mathcal{M})$. Then the unbounded \GTS is equivalent on $\mathcal{M}$ to the $\tlimbound$-bounded \GTS.
\end{corollary}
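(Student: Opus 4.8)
The plan is to reduce the equivalence to a \emph{local} comparison of transition subgames and then argue by induction on the \emph{strategic nesting depth} $\mathrm{sd}(\varphi)$ of $\varphi$ (the maximal nesting of $\coop{}$-operators, with $\mathrm{sd}(\coop{A}\Phi)=1+\mathrm{sd}(\Phi)$ and $\mathrm{sd}$ otherwise taking the max over immediate subformulae). The two evaluation games $\evalgame(\mathcal{M},q,\varphi)$ and $\evalgame(\mathcal{M},q,\varphi,\tlimbound)$ are generated by the \emph{identical} rules \textbf{1}--\textbf{6} and differ only inside the transition subgames entered at strategic locations $(\pl,q',\coop{A}\Phi,\emptyset)$: the unbounded game plays $\defstyle{g}(\pl,q',\coop{A}\Phi)$ (no timers; an infinite play is lost by the last seeker), while the $\tlimbound$-bounded game plays $\defstyle{g}(\pl,q',\coop{A}\Phi,\tlimbound)$. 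Hence it suffices to show that every such transition subgame has the same winner in both variants. The base case $\mathrm{sd}(\varphi)=0$ is immediate: no $\coop{}$ occurs, so no temporal or strategic location ever arises, rule \textbf{4} never fires, and the two evaluation games literally coincide, hence $\mathcal{M},q\Vdash\varphi$ iff $\mathcal{M},q\Vdash_\tlimbound\varphi$.

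For the inductive step the core is the following local lemma: if the exit locations of $\defstyle{g}(\pl,q',\coop{A}\Phi)$ are classified identically (winning for Eloise, resp.\ for Abelard) in the two variants, then, for $\tlimbound\geq\rbb(\mathcal{M})$, the winner of $\defstyle{g}(\pl,q',\coop{A}\Phi)$ coincides with the winner of $\defstyle{g}(\pl,q',\coop{A}\Phi,\tlimbound)$. This is where Proposition~\ref{prop:RBB} and positional determinacy (Propositions~\ref{the: bounded determinacy} and~\ref{the: unbounded determinacy}) do the work. Suppose a player wins the unbounded transition game; by stability (Proposition~\ref{prop:RBB}) that player can win it using only timers below $\rbb(\mathcal{M})$. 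Since $\rbb(\mathcal{M})\leq\tlimbound$, every such timer is a legal $\tlimbound$-bounded timer, and passing from the semi-bounded setting (where only the winner uses timers) to the fully $\tlimbound$-bounded game only restricts the \emph{opponent}: the opponent's timer-constrained seeker turns are, forgetting their timers, a subset of the opponent's free plays, against all of which the winner's strategy already prevails, reaching the same exit location. Thus the unbounded winner also wins the $\tlimbound$-bounded game; applying this to whichever of Eloise/Abelard wins the unbounded game and invoking determinacy of both games forces the two winners to coincide.

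It remains to verify the hypothesis of the local lemma, and this is exactly the inductive content. An exit $(\pl,q',\Phi,T)$ launches a continued game using only rules \textbf{1}--\textbf{3},\textbf{5},\textbf{6} and rule \textbf{4} with $T\neq\emptyset$; this is a finite Boolean evaluation of $\Phi$ whose outcome depends purely on $T$ and $\Phi$ and is therefore identical in both variants. An exit $(\pl,q',\chi,\emptyset)$ arising from a \emph{challenge} launches a fresh evaluation game on a state formula $\chi$ lying under the top $\coop{}$ of $\coop{A}\Phi$, so $\mathrm{sd}(\chi)<\mathrm{sd}(\coop{A}\Phi)\leq\mathrm{sd}(\varphi)$, and the induction hypothesis gives that the bounded and unbounded games on $\chi$ have the same winner. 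With both kinds of exit classified identically, the local lemma applies at $(\pl,q',\coop{A}\Phi,\emptyset)$, and propagating the coincidence of winners upward through the identical Boolean rules yields $\mathcal{M},q\Vdash\varphi$ iff $\mathcal{M},q\Vdash_\tlimbound\varphi$. The main obstacle I anticipate is precisely this bookkeeping: interfacing the induction on strategic depth with the single-game stability statement of Proposition~\ref{prop:RBB}, i.e.\ ensuring that the win/lose labelling of exit locations—on which stability tacitly depends—is already known to agree across the two semantics before stability is invoked.
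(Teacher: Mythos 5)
Your proposal is correct and rests on exactly the same two pillars as the paper's own proof: Proposition~\ref{prop:RBB} (stability of $\rbb(\mathcal{M})$ as a timer bound) applied to whichever player wins the unbounded game, together with determinacy (Proposition~\ref{the: unbounded determinacy}) to handle the negative direction. The induction on strategic nesting depth and the local lemma about exit locations merely make explicit the bookkeeping that the paper compresses into ``Hence clearly,'' so this is the same argument carried out in more detail rather than a different route.
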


\aamasp{\vspace{-2mm}}

\begin{proof}

Suppose first that $\mathcal{M},q\Vdash\varphi$. By Proposition \ref{prop:RBB} Eloise can win the evaluation game using timers smaller than $\tlimbound$ when being the seeker. Hence clearly $\mathcal{M},q\Vdash_\tlimbound\varphi$. 

Suppose then $\mathcal{M},q\not\Vdash\varphi$. By Proposition~\ref{the: unbounded determinacy}, Abelard has a winning strategy in $\evalgame(\mathcal{M},q,\varphi)$. Thus, by Proposition~\ref{prop:RBB}, Abelard can win $\evalgame(\mathcal{M},q,\varphi)$ using timers smaller than $\tlimbound$ when being the seeker. Hence, Abelard clearly has a winning strategy in $\evalgame(\mathcal{M},q,\varphi,\tlimbound)$ and thus $\mathcal{M},q\not\Vdash_\tlimbound\varphi$. 
\end{proof}

Consequently, finite timers suffice in image-finite models. However, the finitely bounded \GTS (with $\tlimbound=\omega$) is not generally equivalent to the unbounded \GTS. See the following example.

\begin{example}[C.f. Example 3.7 in \cite{GKR-AAMAS2016}]\label{ex: omega branching}
Consider the image infinite concurrent game model $\mathcal{M}$ which is displayed in the figure below.
%

\begin{center}
\begin{tikzpicture}
	[
	scale=0.55,
	prestate/.style={rectangle,draw=black!100,fill=black!20,thick,inner sep=3pt,minimum width=5mm,font=\small},
	state/.style={rectangle,rounded corners,draw=black!50,thick,inner sep=3pt,minimum width=5mm,font=\small},
	tb/.style={dashed,-latex},
	sb/.style={-latex},
	action/.style={font=\tiny},
	label/.style={font=\scriptsize}
	]
	\node at (0,3) [state] (s0) {$\neg p$};
		\node [label,above= 1pt of s0] {$s_0$};
	\node at (-6,0) [state] (s10) {$p$};
		\node [label,below= 1pt of s10] {$t_0$};
	\node at (-3,0) [state] (s20) {$\neg p$};
		\node [label,below= 1pt of s20] {$t_1$};
	\node at (0,0) [state] (s30) {$\neg p$};
		\node [label,below= 1pt of s30] {$t_2$};
	\node at (3,0) [state] (s40) {$\neg p$};
		\node [label,below= 1pt of s40] {$t_3$};
	\node at (6,0) [state] (s50) {$\neg p$};
		\node [label,below= 1pt of s50] {$t_4$};
	\node at (7,1.5) {$\cdots$};
	\draw [sb] (s0) to node [action,left] {$0,1$} (s10);
	\draw [sb] (s0) to node [action,right] {$0,2$} (s20);
	\draw [sb] (s0) to node [action,right] {$0,3$} (s30);
	\draw [sb] (s0) to node [action,right] {$0,4$} (s40);
	\draw [sb] (s0) to node [action,right] {$0,5$} (s50);
	\draw [sb] (s20) to node [action,below] {$0,0$} (s10);
	\draw [sb] (s30) to node [action,below] {$0,0$} (s20);
	\draw [sb] (s40) to node [action,below] {$0,0$} (s30);
	\draw [sb] (s50) to node [action,below] {$0,0$} (s40);
	\draw [sb] (8.5,0) to node [action,below] {$0,0$} (s50);
	\draw [sb,loop left] (s10) to node [action,left] {$0,0$} (s10);
\end{tikzpicture}
\end{center}

Here we clearly have $\mathcal{M},s_0\Vdash \coop{1}\F p$ since every path from $s_0$ will eventually reach the state $t_0$ where $p$ is true. However, $\mathcal{M},s_0\not\Vdash_\omega \coop{1}\F p$ since for any value $n<\omega$ for the timer, chosen by Eloise, Abelard can choose $n$ for the first action of agent $2$ and then it will take $n+1$ rounds to reach $t_0$.

Because $\rbb(\mathcal{M})=\aleph_1$ (equal to $2^{\aleph}_0$ if we assume the continuum hypothesis), by Corollary~\ref{the: bounded vs. unbounded} we have $\mathcal{M},s_0\Vdash_{\aleph_1} \coop{1}\F p$. However, in this particular model, we also have $\mathcal{M},s_0\Vdash_{\omega+1} \coop{1}\F p$ since Eloise can win the game by first choosing $\omega$ for the value of the timer and then lowering its value to $n<\omega$ which corresponds the the action which Abelard first chooses for the agent 2. 

\end{example}


\subsection{Regular strategies}

Here we define a notion of a regular strategy which will be important for the proofs later in this paper. We only define this concept for Eloise only for the transition games in which Eloise is the verifier. This suffices for our needs, but the definition---and the related Lemma~\ref{the: regular strategies}---could easily be generalized for both players and all kinds of transition games.

\begin{definition}
A strategy $\tau$ for Eloise in a transition game $\defstyle{g}({\bf E},q,\coop{A}\Phi)$ is \defstyle{regular}, if the following properties hold:
\begin{enumerate}[leftmargin=20pt]
\item[(i)] $\tau$ instructs Eloise to make all the claims which are valid (by the respective \GTS). Moreover, $\tau$~instructs Eloise to challenge all the claims which Abelard makes. (Note that this latter condition is safe for Eloise since she is given the chance to make every claim first and thus, by the first condition, \emph{Abelard can only make claims which are false}.)

\item[(ii)] $\tau$ instructs Eloise to try to end the game (by ending her seeker turn or by not taking a new seeker turn) always when the truth function $T$ has winning values for Eloise---that is, she would a have a winning strategy from the exit location if Abelard did not want to continue as a seeker. 

\item[(iii)] Actions chosen by $\tau$ (for the agents in $A$) are independent of the current seeker $\ctr$ and seeker turn counter $n\in\mathbb{N}$ in configurations.
\end{enumerate}
\end{definition}
Note that the conditions (i)-(iii) together imply that \emph{all} the actions chosen by a regular strategy are independent of the current seeker $\ctr$ and seeker turn counter $n\in\mathbb{N}$ in configurations. Hence, the actions chosen by a regular strategy depend only\footnote{The parameter $x$ and all the other information that is should be encoded in the configurations (see Remark~\ref{remark}) are only used for describing the current sub-phase of the game. Hence, it is easy to see players' strategies cannot depend on these parameters.} on the pairs $(q,T)$, where $q$ is the current state and $T$ is the current truth function. 
Also note that since, by (i), Eloise makes all the valid verifications and falsifications, the truth function $T$ is always determined by the path that has been formed by the transition game.

The following lemma shows that from now on we may assume all winning strategies to be regular. Since regular strategies depend only on the states and the truth function, the additional parameters $\ctr$ and $n$ cannot be used for ``signalling'' any information for $\tau$.

\begin{lemma}\label{the: regular strategies}
If Eloise has a winning strategy in a transition game $\defstyle{g}({\bf E},q,\coop{A}\Phi)$, then she has a regular winning strategy in that game. 
\end{lemma}

\begin{proof}
Suppose that Eloise has winning strategy $\tau$  in $\defstyle{g}({\bf E},q,\coop{A}\Phi)$. 
We first note that, for checking the regularity conditions (i)--(iii), it suffices the we only consider the configurations that can be reached with the strategy of Eloise. This is because we can choose arbitrary actions for all the other configurations in order to satisfy the regularity conditions. 
We make the strategy $\tau$ regular by doing the following modifications (in the given order).
\begin{enumerate}[leftmargin=20pt]
\item If $\tau$ does not satisfy the regularity propety (i), then we simply first modify it so that Eloise makes all the claims which are true by \GTS; it is clear that we end up in Eloise's winning exit location if Abelard challenges these new claims. 
Moreover, we then redefine $\tau$ to challenge all the claims made by Abelard; since all of these claims must now by false by \GTS, it follows from the determinacy of evaluation games that every challenge by Eloise leads into an exit location which is winning for her.
After these modifications, $\tau$ is still a winning strategy and it now satisfies the regularity property (i).

\item Let $c=(\pl,q,T,n,\defstyle{ii})$ be a configuration that can be reached with $\tau$ so  that $({\bf E},q,\Phi,T)$ is a winning location for Eloise, but $\tau$ does not instruct Eloise to try to end the transition game at $c$. We then redefine $\tau$ to instruct Eloise to try to end the game at $c$. If Abelard also wants to end the game, then we reach a winning exit location for Eloise. If Abelard does not want to end the game, then the game continues from a configuration $c'$ that must be winning for Eloise. We can then modify $\tau$ in such way that it is a winning strategy from $c'$. Moreover, we can do this while maintaining the regularity conditions (i) and (ii)---we simply do the same modifications as above for all new configurations that violate these regularity conditions.

After doing the the procedure above for all configurations for which $\tau$ violates the regularity property (ii), $\tau$ satisfies the properties (i) and (ii).

\item In order to satisfy the regularity condition (iii), will first modify $\tau$ in various ways and then show that the modified strategy satisfies the condition (iii). Supposing that $\tau$ already satisfied the conditions (i) and (ii), it will then be regular. 

Suppose first that $c=({\bf A},q,T,n,\defstyle{iii})$ is a winning configuration for Eloise, but $T$ is not winning for Eloise (in the boolean game that potentially follows). Let $c'=({\bf E},q,T,n-1,\defstyle{iii})$. Since Abelard could have ended his Seeker turn at $({\bf A},q,T,n,\defstyle{ii})$, it now follows that $c'$ must be a winning configuration for Eloise. We then modify $\tau$ in such way that it makes the same choice at $c'$ and $c$ (we can do that while maintaining the regularity conditions (i) and (ii) by doing the modifications above---if necessary). We do these modifications for all configurations $c$ of this type.

We then do the following procedure for every integer $n\leq|\mathit{At}(\Phi)|$, beginning from $n=|\mathit{At}(\Phi)|$. Let $c_n=(\pl,q,T,n,\defstyle{iii})$ be a configuration that can be reached with $\tau$. Let $n'\leq|\mathit{At}(\Phi)|$ be the largest integer such that $c_{n'}=(\pl,q,T,n',\defstyle{iii})$ can be reached with $\tau$. We redefine $\tau$ at $c_n$ in such a way that it selects the same actions as at $c_{n'}$. We continue this modification in such a way that, when playing from $c_n$, we can only reach configurations of the same form as those that can be reached from $c_{n'}$, the only difference being the value of seeker alternation counter. Now all the exit locations that can be reached by using $\tau$ from $c_n$ must be winning for Eloise. Since the truth function can be updated at most $|\mathit{At}(\Phi)|$ many times and, by condition (ii), $T$ gets updated after every seeker alternation, it is impossible that Eloise would now lose the game because the seeker turn counter would become zero. Hence $\tau$ is still a winning strategy after these modifications.

We observe that by doing the procedure above for every $n\leq|\mathit{At}(\Phi)|$ (starting from the highest values) and for every configuration $c_n$, we finally obtain a winning strategy that is completely independent of the seeker turn counter. Also note that, by applying this procedure, we also maintain the regularity conditions (i) and (ii) for $\tau$.

To prove that the actions chosen by $\tau$ for $A$ are now independent of \emph{both} the seeker $\ctr$ and the seeker turn counter $n$, suppose for the sake of contradiction that $\tau$ assigns different actions  for $A$ in configurations $c=(\pl,q,T,n,\defstyle{iii})$ and $c'=(\pl',q,T,n',\defstyle{iii})$ such that $c\neq c'$ and both $c$ and $c'$ can be reached with $\tau$. Since $\tau$ is independent of the seeker turn counter, we must have $\pl\neq\pl'$. By symmetry we may assume that $\pl={\bf E}$ and $\pl'={\bf A}$.

Suppose first that $({\bf E},q,\Phi,T)$ is a winning position for Eloise. Now, by the condition (ii), $\tau$ instructs Eloise to end her seeker turn at $({\bf E},q,T,n,\defstyle{ii})$, and thus the configuration $c$ cannot be reached with $\tau$.
Suppose then that $({\bf E},q,\Phi,T)$ is not a winning position for Eloise. Recall that we have defined $\tau$ to make the same choice at $c'$ as at the configuration $c''=({\bf E},q,T,n'-1,\defstyle{iii})$. But this is impossible since $\tau$ is independent of the seeker turn counter and that is the only parameter that separates the configurations $c$ and $c''$.
\end{enumerate}

By doing all the modifications above, $\tau$ becomes a regular strategy. Since it remains a winning strategy for Eloise even after all  these modifications, Eloise thus has a regular winning strategy in $\defstyle{g}({\bf E},q,\coop{A}\Phi)$.
\end{proof}

Regular strategies will play an important role in the next section where we prove the equivalence of \GTS and the standard compositional semantics for \ATLplus. This is because regular strategy of Eloise in a transition game for $\coop{A}\Phi$ can be used in a straightforward way for formulating a collective strategy $S_A$ for the coalition $A$ (and vice versa).


\section{GTS vs compositional semantics for ATL$^{+}$}
\label{sec:equivalence}

In this section we show that our game-theoretic semantics is equivalent to the standard (perfect-recall) compositional semantics of \ATLplus. From the results of the previous section it follows that this equivalence holds for both unbounded \GTS and bounded \GTS with a stable timer bound.

We begin with some preliminary definitions. We first define a so-called \defstyle{finite path semantics},
to be used later. See \cite{BJ} for a similar definition.
We define the \defstyle{length} $\lgt(\lambda)$ of a finite path $\lambda$ as the number of transitions in $\lambda$ (whence the last state of $\lambda$ is $\lambda[\lgt(\lambda)]$).
If $\lambda$ is a prefix sequence of $\lambda'$, we write $\lambda\preceq\lambda'$.
%


%
\aamasp{
\noindent
\begin{definition}\label{def: finite path semantics}
Let $\mathcal{M}$ be a $\CGM$ and $\lambda\in\paths_\text{fin}(\mathcal{M})$.
\defstyle{Truth} of an $\ATL^+$ path
formula $\Phi$ on the finite path $\lambda$ is defined as expected,
the non-obvious clauses being as follows:
\begin{itemize}
\item $\mathcal{M},\lambda\models
\X\varphi$ iff\; $\lgt(\lambda)\geq 1$ and $\mathcal{M},\lambda[1]\models\varphi$.
\item $\mathcal{M},\lambda\models\varphi\U\psi$ iff
there exists some $i\leq \lgt(\lambda)$ such that $\mathcal{M},\lambda[i]\models\psi$ and $\mathcal{M},\lambda[j]\models\varphi$ \,for all $j < i$.
\end{itemize}
\end{definition}
}
\begin{definition}\label{def: finite path semantics}
Let $\mathcal{M}$ be a $\CGM$ and $\lambda\in\paths_\text{fin}(\mathcal{M})$.
\defstyle{Truth} of a path formula $\Phi$ of $\ATL^+$ on $\lambda$ is defined as follows:
%
\begin{itemize}
\item $\mathcal{M},\lambda\models\varphi$
iff $\mathcal{M},\lambda[0]\models\varphi$\ (where $\varphi$ is a 
state formula). 
\item $\mathcal{M},\lambda\models
\X\varphi$ iff\; $\lgt(\lambda)\geq 1$ and $\mathcal{M},\lambda[1]\models\varphi$.
\item $\mathcal{M},\lambda\models
\neg\Phi$ iff $\mathcal{M},\lambda\not\models\Phi$.
\item $\mathcal{M},\lambda\models\Phi\vee\Psi$ iff $\mathcal{M},\lambda\models\Phi$ or $\mathcal{M},\lambda\models\Psi$.
\item $\mathcal{M},\lambda\models\varphi\U\psi$ iff
there exists some $i\leq \lgt(\lambda)$ such that $\mathcal{M},\lambda[i]\models\psi$ and $\mathcal{M},\lambda[j]\models\varphi$ \,for all $j < i$.
\end{itemize}
\end{definition}

\begin{definition}\label{def: truth alternation}
Let $\mathcal{M}$ be a \CGM, $\Lambda\in\paths(\mathcal{M})$ and $\Phi$ a path formula of \ATLplus. 
An index $i\geq 1$ is a \defstyle{truth-swap point} of $\Phi$ on $\Lambda$ if either of the following holds:
\begin{enumerate}
\item $\mathcal{M},\Lambda[i\!-\!1,\infty)\not\models\Phi$ and $\mathcal{M},\Lambda[i,\infty)\models\Phi$.
\item $\mathcal{M},\Lambda[i\!-\!1,\infty)\models\Phi$ and $\mathcal{M},\Lambda[i,\infty)\not\models\Phi$.
\end{enumerate}
(Above the notation $\Lambda[i,\infty)$ denotes the infinite path $(\Lambda[i],\Lambda[i+1],\dots)$.)

We define the \defstyle{truth-swap number} of $\Phi$ on $\Lambda$ to be
\[
	\mathit{TSN}(\Phi,\Lambda) := \card(\{i\mid i \text{ is a truth-swap point of $\Phi$ on $\Lambda$}\}).
\]
\end{definition} 

\noindent
The claims of the following lemma are
easy to prove. Similar observations have been made in \cite{BJ}.

\aamasp{\vspace{-1mm}}

\begin{lemma}\label{the: finite path lemma}
Let $\mathcal{M}$ be a \CGM, $\Lambda\in\paths(\mathcal{M})$ and $\Phi$ a path formula of \ATLplus.
Now, the following claims hold:
\begin{enumerate}
\item $\mathit{TSN}(\Phi,\Lambda)\leq |\{\Psi\in\mathit{At}(\Phi)\, |\, \Psi\text{ is a
temporal subformula}\}|$.
\item $\mathcal{M},\Lambda\models\Phi$ iff there is some $k\in\mathbb{N}$ s.t. $\mathcal{M},\lambda\models\Phi$ for every finite $\lambda\preceq\Lambda$ for which $\lgt(\lambda)\geq k$.
\end{enumerate}
\end{lemma}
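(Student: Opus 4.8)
The plan is to reduce everything to an analysis of how the finite-path truth value of each relative atom of $\Phi$ evolves as the prefix of $\lambda$ grows. Write $\mathit{At}(\Phi)=\{\chi_1,\dots,\chi_m\}$ and fix the Boolean function $f$ determined by the Boolean skeleton of $\Phi$, so that $\Phi$ is obtained from $\chi_1,\dots,\chi_m$ by the connectives $\neg,\vee$. By the clauses for $\neg$ and $\vee$ in Definition~\ref{def: finite path semantics}, for every finite prefix $\pi$ of $\lambda$ the value $[\mathcal{M},\pi\models\Phi]$ equals $f$ applied to the values $[\mathcal{M},\pi\models\chi_j]$; the same holds on the infinite path $\lambda$ by Definition~\ref{def: compositional semantics for ATL}. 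For $n\in\mathbb{N}$ let $\pi_n$ be the prefix of $\lambda$ of length $n$ and set $t_j(n):=[\mathcal{M},\pi_n\models\chi_j]$ and $t_j^\infty:=[\mathcal{M},\lambda\models\chi_j]$.

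The heart of the argument is the following claim about each atom $\chi_j$: the sequence $t_j(0),t_j(1),t_j(2),\dots$ is eventually constant with limit $t_j^\infty$, and it changes value at most once if $\chi_j$ is a temporal subformula and never otherwise. I would prove this by cases on the form of $\chi_j$. If $\chi_j$ is a proposition or a strategic formula $\coop{A}\Psi$, it is a state formula, so $t_j(n)=[\mathcal{M},\lambda[0]\models\chi_j]=t_j^\infty$ for all $n$, and the sequence is constant. If $\chi_j=\X\varphi$, then $t_j(0)=\bot$ while $t_j(n)=[\mathcal{M},\lambda[1]\models\varphi]=t_j^\infty$ for all $n\geq 1$, so there is at most one change (at the step $n=1$). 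If $\chi_j=\varphi\U\psi$ (with $\varphi,\psi$ state formulae), then any witness $i\leq n$ for length $n$ remains a witness for every larger length, so $t_j$ is monotone non-decreasing in $n$; hence it changes at most once, and since a witness exists on $\lambda$ exactly when one exists at some finite length, its limit is $t_j^\infty$. This case analysis is the main (though routine) obstacle, resting on the monotonicity of $\U$ on finite prefixes and on the early determination of $\X$ and of state atoms.

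Given the claim, both parts follow by bookkeeping. For part~1, an index $i\geq 1$ can be a truth swap point of $\Phi$ only if some $t_j$ changes value between $n=i-1$ and $n=i$, since otherwise $f$ is applied to the same arguments and cannot change; only temporal atoms ever change, and each changes at most once, so the set of indices at which some atom changes has size at most the number of temporal subformulae in $\mathit{At}(\Phi)$, and $\mathit{TSN}(\Phi,\lambda)$ is bounded by this number. For part~2, since $\mathit{At}(\Phi)$ is finite and each $t_j$ stabilizes to $t_j^\infty$, I would choose $k$ larger than all the (finitely many) change-points, so that $t_j(n)=t_j^\infty$ for every $j$ and every $n\geq k$; then $[\mathcal{M},\pi_n\models\Phi]=f(t_1(n),\dots,t_m(n))=f(t_1^\infty,\dots,t_m^\infty)=[\mathcal{M},\lambda\models\Phi]$ for all $n\geq k$. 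The forward direction of part~2 is then immediate with this $k$, and for the converse it suffices to evaluate $\Phi$ on a single prefix of length $\geq k$ supplied by the right-hand side, whose finite-path value already coincides with the infinite-path value.
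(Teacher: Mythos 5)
Your proof is correct. Note that the paper itself supplies no argument for this lemma --- it merely declares the claims ``easy to prove'' and points to similar observations in the cited work of Bozzelli--Jamroga --- so there is no official proof to diverge from; your reduction to the stabilization behaviour of the relative atoms (state atoms constant, $\X$-atoms determined after one step, $\U$-atoms monotone in the prefix length, hence each temporal atom changing value at most once and converging to its infinite-path value) is exactly the routine argument the authors have in mind, and it cleanly yields both the bound on $\mathit{TSN}(\Phi,\lambda)$ and the eventual agreement of the finite-path and infinite-path values of $\Phi$.
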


\begin{theorem}\label{the: equivalence to compositional semantics}
The unbounded \GTS is equivalent to the standard (perfect-recall) compositional semantics of \ATLplus.
\end{theorem}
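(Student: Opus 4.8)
The plan is to prove both directions of $\mathcal{M},q \Vdash \varphi \Leftrightarrow \mathcal{M},q \models \varphi$ simultaneously by induction on the structure of the state formula $\varphi$, invoking positional determinacy of unbounded evaluation games (Proposition~\ref{the: unbounded determinacy}) throughout. The atomic and Boolean cases are immediate. For $\varphi = p$ the initial location $({\bf E},q,p,\emptyset)$ is an ending location decided by $q \in v(p)$ (rule \textbf{1} with $T=\emptyset$); for $\varphi = \neg\psi$ the verifier and falsifier roles swap (rule \textbf{2}), so by determinacy and the induction hypothesis Eloise wins iff $\mathcal{M},q \not\models \psi$, i.e.\ iff $\mathcal{M},q \models \neg\psi$; for $\varphi = \psi \vee \theta$ Eloise's choice (rule \textbf{3}) yields a win iff she wins in one disjunct. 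Since path formulae occur only inside strategic subformulae, the entire burden falls on the case $\varphi = \coop{A}\Phi$.

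For the strategic case I would isolate and prove the central equivalence: Eloise has a winning strategy in the transition game $\embgame({\bf E},q,\coop{A}\Phi)$ iff there is a collective strategy $S_A$ with $\mathcal{M},\Lambda \models \Phi$ for every $\Lambda \in \paths(q,S_A)$. The bridge between the finite paths built inside transition games and the infinite-path semantics of $\Phi$ is Lemma~\ref{the: finite path lemma}: part (2) lets me replace ``$\Phi$ holds on the infinite path $\Lambda$'' by ``$\Phi$ holds on all sufficiently long finite prefixes of $\Lambda$'' in the sense of Definition~\ref{def: finite path semantics}, while part (1) matches the number of truth swaps of $\Phi$ along a path with the seeker turn counter $|\mathit{At}(\Phi)|$, ensuring the alternation bound never cuts a player off prematurely. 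By Claim~\ref{the: regular strategies} I may assume Eloise's strategy is regular; she then challenges every Abelard claim and performs all correct verifications and falsifications herself, so the truth function $T$ becomes a function of the constructed finite path alone, and by the induction hypothesis applied to the challenged subformulae, $T(\chi)=\top$ holds exactly for those relative atoms $\chi$ that are genuinely true (in finite path semantics) at their determination point. Hence the Boolean endgame on $\Phi$ read through $T$ computes precisely $\mathcal{M},\lambda \models \Phi$ for the finite path $\lambda$ built in the game.

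For the direction ($\Rightarrow$) I would take a regular winning strategy for Eloise, read off $S_A$ from her step-phase action choices (which, being regular, depend only on the current state and $T$, hence only on the finite history), and argue by contradiction: if some $\Lambda \in \paths(q,S_A)$ had $\mathcal{M},\Lambda \not\models \Phi$, then by Lemma~\ref{the: finite path lemma} the truth of $\Phi$ stabilises to $\bot$ after a finite prefix of $\Lambda$, and Abelard, taking seeker turns and choosing the $\overline{A}$-actions that follow $\Lambda$, can drive the play past this stabilisation point; once the bounded number of seeker alternations is exhausted the transition game must end with $\Phi$ false under $T$, so Eloise loses, contradicting her winning strategy. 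For ($\Leftarrow$), given $S_A$ achieving $\Phi$ on all plays, Eloise uses $S_A$ for her actions and challenges all of Abelard's claims; since $\Phi$ holds on every play his falsifying goals are never genuinely realisable, so his challenged claims all fail, while Eloise, as seeker, verifies each of her goals at its finite determination point. Once $T$ supports $\Phi$—which occurs after a finite prefix by Lemma~\ref{the: finite path lemma}(2)—verified atoms stay verified (values in $T$ are never overwritten), so Eloise ends her seeker turn and never needs to seek again; Abelard's remaining bounded seeker turns cannot change the outcome, and Eloise wins. Nested strategic atoms $\coop{A'}\Psi \in \mathit{At}(\Phi)$ are absorbed by the induction hypothesis, since a challenge on such an atom restarts a fresh evaluation game on $\coop{A'}\Psi$.

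The step I expect to be the main obstacle is establishing the faithfulness of the truth function, namely that under regular optimal play $T$ records exactly the finite-path truth values of the relative atoms at the correct points of the constructed path, together with the precise matching of seeker-turn alternations to truth swaps of $\Phi$. Everything downstream is bookkeeping, but this correspondence is what makes the finite path construction faithfully simulate the infinite compositional semantics, and it is where the asymmetric treatment of positively occurring (eventuality-like) and negatively occurring (safety-like) goals, as well as of the $\X$- and state-formula atoms which each have a single determination point, has to be verified case by case.
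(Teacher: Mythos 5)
Your proposal follows essentially the same route as the paper's proof: an outer induction on state formulae that reduces everything to the equivalence between Eloise winning the transition game $\defstyle{g}({\bf E},q,\coop{A}\Phi)$ and the existence of a collective strategy $S_A$ securing $\Phi$, mediated by regular strategies (Claim~\ref{the: regular strategies}) so that $T$ depends only on the constructed path, and by Lemma~\ref{the: finite path lemma} to pass between finite prefixes and infinite paths. The ``main obstacle'' you defer---faithfulness of the truth function---is exactly what the paper settles by an inner induction on $\mathrm{SUB}_{\mathit{At}}(\Phi)$, showing that every reachable location $(\pl,\lambda_0[l],\Psi,T)$ satisfies $\mathcal{M},\lambda_0\models\Psi$ iff $\pl={\bf E}$; the reason all of Abelard's claims fail under challenge is not that his goals are unrealisable but that Eloise has already made every compositionally true claim herself, so any claim left to Abelard is false and loses by the outer induction hypothesis.
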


\begin{proof}
We prove by induction on $\ATLplus$ state formulae $\varphi$ that for any 
CGM $\mathcal{M}$ and a state $q$ in $\mathcal{M}$: 
\[
	\mathcal{M},q\models\varphi \; \text{ iff } \; \text{Eloise has a winning strategy in } \mathcal{G}(\mathcal{M},q,\varphi).
\]
If $\varphi$ is a proposition symbol, then the claim holds trivially.

Let $\varphi=\neg\psi$ and suppose first that $\mathcal{M},q\models\neg\psi$, i.e. $\mathcal{M},q\not\models\psi$. By the inductive hypothesis Eloise does not have a winning strategy in $\mathcal{G}(\mathcal{M},q,\psi)$. Since evaluation games are determined, Abelard has a winning strategy in $\mathcal{G}(\mathcal{M},q,\psi)$. Thus, Eloise has a winning strategy in $\mathcal{G}(\mathcal{M},q,\neg\psi)$.
Suppose then that Eloise has a winning strategy in the evaluation game $\mathcal{G}(\mathcal{M},q,\neg\psi)$. Then Eloise cannot have a winning strategy in $\mathcal{G}(\mathcal{M},q,\psi)$. Hence, by the inductive hypothesis, $\mathcal{M},q\not\models\psi$, i.e. $\mathcal{M},q\models\neg\psi$.

Let $\varphi=\psi\vee\theta$ and suppose that $\mathcal{M},q\models\psi\vee\theta$, i.e. $\mathcal{M},q\models\psi$ or $\mathcal{M},q\models\theta$. Suppose first that $\mathcal{M},q\models\psi$, whence by the inductive hypothesis Eloise has a winning strategy in $\mathcal{G}(\mathcal{M},q,\psi)$. Now Eloise can win $\mathcal{G}(\mathcal{M},q,\psi\vee\theta)$ by choosing $\psi$ on the first move. The case when $\mathcal{M},q\models\theta$ is analoguos. 
Suppose now that Eloise has a winning strategy in the evaluation game $\mathcal{G}(\mathcal{M},q,\psi\vee\theta)$. Let $\chi\in\{\psi,\theta\}$ be disjunct that Eloise chooses when following her winning strategy. Now Eloise must have a winning strategy in $\mathcal{G}(\mathcal{M},q,\chi)$ and thus by the inductive hypothesis $\mathcal{M},q\models\chi$. Therefore $\mathcal{M},q\models\psi\vee\theta$.

Finally, let $\varphi=\coop{A}\Phi$. It suffices to show that Eloise has winning strategy in the (unbounded) transition game $\defstyle{g}({\bf E},q,\coop{A}\Phi)$ if and only if the coalition $A$ has a (perfect recall) strategy $S_A$ such that $\mathcal{M},\Lambda\models\Phi$ for every $\Lambda\in\paths(q,S_A)$. The cases (a) and (b) which follow correspond to the two directions of this equivalence.

\medskip

(a) Suppose first that ${\bf E}$ has a winning strategy $\tau$ in the transition game $\defstyle{g}({\bf E},q,\coop{A}\Phi)$. By Lemma~\ref{the: regular strategies} we may assume that $\tau$ is regular. Let $T_\defstyle{g}$ be the game tree that is formed by all of those configurations that can be encountered with $\tau$. We define $S_A$ by using the actions according to $\tau$ for every \emph{finite path of states} that occurs in consecutive configurations in $T_\defstyle{g}$. The actions for all other finite paths are irrelevant.

In order to show that $S_A$ is well-defined this way, let $\lambda,\lambda'$ be finite branches of \emph{configurations} in $T_\defstyle{g}$ such that the states occurring in configurations of $\lambda$ and $\lambda'$ are in the same order. Let $c=(\pl,q,T,n,\defstyle{iii})$ and $c'=(\pl',q,T',n',\defstyle{iii})$ be the last configurations in $\lambda$ and $\lambda'$, respectively. It suffices to show that $\tau$ assigns the same actions for $A$ in both $c$ and $c'$. Since $\lambda$ and $\lambda'$ have visited the same states, by regularity condition (i), we must have $T=T'$. Therefore, by regularity condition (iii), $\tau$ assigns the same actions for $c$ and $c'$.

Let $\Lambda\in\paths(q,S_A)$, whence states in $\Lambda$ occur in some infinite tuple of configurations in $T_\defstyle{g}$.
In the (infinite) play of $\defstyle{g}({\bf E},q,\coop{A}\Phi)$, that corresponds to $\Lambda$, Eloise does only finitely many verifications and cannot stay as a seeker for infinitely many rounds (since $\tau$ is a winning strategy). 
Let $k\in\mathbb{N}$ be such that Eloise neither does any further verifications nor becomes a seeker after the state $\Lambda[k]$.
Let $\lambda_0\preceq\Lambda$ be a finite path such that  $|\lambda_0|\geq k$. 

We can show by induction on the formulae in $\subf_{\mathit{At}}(\Phi)$ that if a position of the form $(\pl,\lambda_0[l],\Psi,T)$, where $\Psi\in \subf_{\mathit{At}}(\Phi)$, can be reached by using $\tau$, then the following holds:
\[
	\mathcal{M},\lambda_0\models\Psi \; \text{ iff } \pl = {\bf E}.
\]
\begin{itemize}
\item The cases $\Psi=\varphi$ and $\Psi=\atlx\varphi$ are easy to prove.

\item Let $\Psi=\psi\atlu\theta$ and suppose first that $\pl={\bf E}$. Since $\tau$ is a regular winning strategy, there must be $i\leq k$ s.t. Eloise verifies $\psi\atlu\theta$ at $\lambda_0[i]$. If Abelard challenged Eloise's claim, the evaluation game would have continued from the position $({\bf E},\lambda_0[i],\theta,T)$. By the (outer) inductive hypothesis we have $\mathcal{M},\Lambda[i]\models\theta$. Let then $j<i$. Now Abelard could have attempted to falsify $\psi$ at $\Lambda[j]$, whence Eloise must have challenged since $\tau$ is a regular winning strategy. Then the evaluation game would have continued from the position $({\bf E},\Lambda[j],\psi,T)$ and thus by the (outer) inductive hypothesis $\mathcal{M},\Lambda[j]\models\psi$. Thus we have shown that $\mathcal{M},\lambda_0\models\psi\atlu\theta$.

Suppose now that $\pl={\bf A}$. We also suppose, for the sake of contradiction, that $\mathcal{M},\lambda_0\models\psi\atlu\theta$. Now there is $i\leq k$ such that  $\mathcal{M},\lambda_0\models\theta$. If Abelard would have verified $\theta$ at $\lambda_0[i]$, then Eloise would have lost by the (outer) inductive hypothesis. Hence Eloise should have falsified $\psi\atlu\theta$ at some state $\lambda_0[j]$, where $j<i$. But then by the (outer) inductive hypothesis we must have $\mathcal{M},\lambda_0[j]\not\models\psi$, which is a contradiction. 

\item Suppose that $\Psi=\neg\Theta$. The next position of the evaluation game is $(\opl,\lambda[l],\Theta,T)$ and thus by the (inner) inductive hypothesis, $\mathcal{M},\lambda_0\not\models\Theta$ iff $\opl={\bf A}$. Hence,  we have $\mathcal{M},\lambda_0\models\neg\Theta$ iff $\pl={\bf E}$

\item The case $\Psi=\Theta_1\vee\Theta_2$ is proven similarly to the previous case.
\end{itemize}

Abelard is the seeker at the last state $\lambda_0[m]$ of $\lambda_0$ and may attempt to end the transition game at $\lambda_0[m]$. By our assumption Eloise does not become a seeker and thus the evaluation game is continued from $({\bf E},\lambda_0[m],\Phi,T)$ for some~$T$. By the induction proof above, we must have $\mathcal{M},\lambda_0\models\Phi$. Hence, by Lemma~\ref{the: finite path lemma} we have $\mathcal{M},\Lambda\models\Phi$.

\medskip

(b) Suppose then that there is a joint (perfect recall) strategy $S_A$ such that  $\mathcal{M},\Lambda\models\Phi$ for every $\Lambda\in\paths(q,S_A)$. We define a \emph{perfect recall} strategy $\tau$ for Eloise as follows. Suppose that game is at some configuration $c$ that is reached with a finite path $\lambda_0$ such that $q_0$ is the last state of $\lambda_0$.
\begin{itemize}
\item If $\mathcal{M},q_0\models\theta$ for some $\psi\atlu\theta\in At(\Phi)$, then Eloise claims that $\theta$ is true.
\item If $\mathcal{M},q_0\not\models\psi$ for some $\psi\atlu\theta\in At(\Phi)$, then Eloise claims that $\psi$ is false.
\item Suppose that $q_0=\Lambda[0]$ and $\psi\in\mathit{At}(\Phi)$ is a state formula. If $\mathcal{M},q_0\models\psi$, then Eloise claims that $\psi$ is true.
\item Suppose that $q_0=\Lambda[1]$ and $\atlx\psi\in\mathit{At}(\Phi)$. If $\mathcal{M},q_0\models\psi$, then Eloise claims that $\atlx\psi$ is true.
\item If Abelard makes any claim on the truth of formulae, Eloise always challenges those claims. (Note here that Abelard's claim must be false---according to the compositional truth condition---otherwise Eloise would already have made the same claim by herself.)
\item If Eloise is the seeker in $c$ and $\mathcal{M},\lambda_0\models\Phi$, then Eloise decides to end her seeker turn. 
\item If Abelard ends the seeking at $c$ and $\mathcal{M},\lambda_0\not\models\Phi$, then Eloise decides to become seeker. Otherwise, Eloise ends the transition game at $c$.
\item If Eloise needs to choose actions for agents in coalition $A$ at $c$, she chooses them according to $S_A(\lambda_0)$.
\end{itemize}

We show by (co)-induction on the configurations of the transition game $\defstyle{g}({\bf E},q,\coop{A}\Phi)$, that when Eloise uses $\tau$ she cannot end up in a losing ending position. 
\begin{itemize}
\item Let $c=({\bf E},\ctr,q',T,n,\defstyle{i})$. Since the verifications and challenges are made according to the compositional semantics on the current state, Eloise has a winning strategy from any possible exit position by the (outer) inductive hypothesis.

\item Let $c=({\bf E},\ctr,q',T,n,\defstyle{ii})$. By Lemma~\ref{the: finite path lemma} and the definition of $\tau$, the transition game can only end when $\mathcal{M},\lambda_0\models\Phi$. Hence from the exit position $({\bf E},q',\Phi,T)$, Eloise can play in such a way that for any position $(\pl,q',\Psi,T)$, that is reached, the following condition holds:
\[
	\mathcal{M},\lambda_0\models\Psi \; \text{ iff } \pl = {\bf E},
\]
where $\Psi$ is a subformula of $\Phi$ such that there is $\varphi\in\mathit{At}(\Phi)$ which is a subformula of $\Psi$. Eventually, a location of the form $(\pl,q',\varphi,T)$ is reached, where $\varphi\in\mathit{At}(\Phi)$. Since the verifications by $\tau$ are made according to the compositional truth of the relational atoms of $\Phi$, it is quite obvious to see that $(\pl,q',\varphi,T)$ is a winning position for Eloise.

\item Let $c=({\bf E},\ctr,q',T,n,\defstyle{iii})$. This configuration does not lead to any exit locations.
\end{itemize}

Since Eloise chooses actions for agents in $A$ according to $S_A$, every path of \emph{states} that is formed with $\tau$ is a prefix sequence of some path $\Lambda\in\paths(q,S_A)$. Since $\mathcal{M},\Lambda\models\Phi$ for every $\Lambda\in\paths(q,S_A)$, by Lemma~\ref{the: finite path lemma}, and the definition of $\tau$, Eloise cannot stay as a seeker forever when playing with $\tau$. If Abelard stays as a seeker forever, then Eloise wins. Hence, $\tau$ is a (perfect recall) winning strategy for Eloise.
Since unbounded transition games are positionally determined, there is also a \emph{positional} winning strategy $\tau'$ for Eloise.
\end{proof}

By combining Theorem \ref{the: equivalence to compositional semantics} and Corollary \ref{the: bounded vs. unbounded}, we immediately obtain the following corollary:

\begin{corollary}
If $\tlimbound\geq\rbb(\mathcal{M})$, then the $\tlimbound$-bounded \GTS is equivalent on $\mathcal{M}$ with the standard (perfect recall) compositional semantics of \ATLplus.
\end{corollary}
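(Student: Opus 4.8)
The plan is to chain the two equivalences already established, since the corollary is an immediate consequence of Theorem~\ref{the: equivalence to compositional semantics} and Corollary~\ref{the: bounded vs. unbounded}. First I would fix an arbitrary state $q\in\St$ and an arbitrary formula $\varphi\in\ATLplus$. Corollary~\ref{the: bounded vs. unbounded}, applied under the standing hypothesis $\tlimbound\geq\rbb(\mathcal{M})$, gives $\mathcal{M},q\Vdash_\tlimbound\varphi$ iff $\mathcal{M},q\Vdash\varphi$; that is, the $\tlimbound$-bounded \GTS and the unbounded \GTS agree on $\mathcal{M}$. Next, Theorem~\ref{the: equivalence to compositional semantics} gives $\mathcal{M},q\Vdash\varphi$ iff $\mathcal{M},q\models\varphi$, where $\models$ denotes the standard perfect-recall compositional semantics. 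Composing these two biconditionals by transitivity of ``iff'' yields $\mathcal{M},q\Vdash_\tlimbound\varphi$ iff $\mathcal{M},q\models\varphi$, which is precisely the claimed equivalence on $\mathcal{M}$.

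The only point to verify is that the two cited results are applied to the same range of objects, and they are: both biconditionals are universally quantified over all states $q$ and all \ATLplus-formulae $\varphi$, and Theorem~\ref{the: equivalence to compositional semantics} holds for every \CGM, in particular for the fixed $\mathcal{M}$. Hence there is no genuine obstacle and no further argument is required. If one wished to be fully explicit, one could note that the first equivalence ultimately rests on Propositions~\ref{prop:RBB} and~\ref{the: unbounded determinacy} (invoked through Corollary~\ref{the: bounded vs. unbounded}) to transfer winning strategies within a stable timer bound, while the second rests on the strategy-translation argument of Theorem~\ref{the: equivalence to compositional semantics}; but for the present corollary these are used purely as black boxes, so the combination is routine.
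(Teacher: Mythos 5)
Your proof is correct and matches the paper exactly: the paper derives this corollary by the same immediate combination of Theorem~\ref{the: equivalence to compositional semantics} (unbounded \GTS equals compositional semantics) with Corollary~\ref{the: bounded vs. unbounded} (unbounded \GTS equals $\tlimbound$-bounded \GTS when $\tlimbound\geq\rbb(\mathcal{M})$). No further argument is needed.
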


\section{Model checking ATL$^{+}$ using GTS}
\label{sec:MC}

\noindent \techrep{Here we apply the \GTS to model checking problems for \ATLplus and its fragments.}

\subsection{Revisiting the $\mathrm{PSPACE}$\, upper bound proof}

As mentioned earlier, the $\mathrm{PSPACE}$ upper bound proof
for the model checking of $\ATL^+$ in \cite{BJ} contains a flaw. Indeed, the claim of Theorem 4 in \cite{BJ} is incorrect and a  counterexample to it can be extracted from our Example \ref{ex:2}, where $\mathcal{M}, q_{0} \models \varphi$ for $\varphi =\coop{a_2}(\atlg p_1 \vee \atlf p_2)$. In the notation of \cite{BJ},
since $|St_{\mathcal{M}} | = 3$ and $\mathcal{APF}(\varphi) = 2$,  by the claim there must be a 6-witness strategy for the agent $2$ for 
$(\mathcal{M}, q_{0}, \atlg p_1 \vee \atlf p_2)$. However, this is not the case, since the player 1 can choose to play at $q_{0}$ four times $\beta$, and then  
$\alpha$. 
Then $\mathcal{M}, \Lambda \not\models^{6} (\atlg p_1 \vee \atlf p_2)$ on any resulting path $\Lambda$. 

The reason for the problem indicated above is that compositional semantics easily ignores the role and power of the falsifier (Abelard) in the formula evaluation process.
Still, using the \GTS introduced above, we will demonstrate in a simple way
that the upper bound result is indeed correct.

The input to the model checking problem of $\ATL^+$ is an $\ATL^+$
formula $\varphi$, a finite \CGM $\mathcal{M}$ and 
a state $q$ in $\mathcal{M}$. 
We assume that $\mathcal{M}$ is 
encoded in the standard way (cf. \cite{AHK02,BJ})
that provides a full explicit description of the transition function $o$. 
Unlike \cite{AHK02,BJ}, we do not assume any bounds on the number of
proposition symbols or agents in the input. 
We only consider here the semantics of $\ATL^+$ based on perfect information and perfect-recall strategies. 

\aamasp{\vspace{-0.1cm}}

\begin{theorem}[\cite{BJ}]\label{pspace}
The $\ATL^+$ model checking problem is $\mathrm{PSPACE}$-complete.
\end{theorem}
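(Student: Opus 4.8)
The plan is to prove the two directions separately. \textbf{PSPACE-hardness} is the known lower bound of \cite{BJ} (obtained by a reduction from \Logicname{QBF}, exploiting that a Boolean combination of temporal goals can encode a block of alternating quantifiers); since the flaw identified above concerns only the upper-bound argument, I would simply inherit the hardness. The real work is therefore \textbf{membership in PSPACE}, which I would derive directly from the \GTS developed above rather than from the mixed procedure of \cite{BJ}.

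First I would reduce model checking to deciding the winner of a \emph{bounded} evaluation game. By Theorem~\ref{the: equivalence to compositional semantics}, $\mathcal{M},q\models\varphi$ iff $\mathcal{M},q\Vdash\varphi$. The input model $\mathcal{M}$ is finite, hence image finite, so by Proposition~\ref{prop:limit} applied to each strategic subformula $\coop{A}\Phi$ of $\varphi$ (and using $|\mathit{At}(\Phi)|\leq|\varphi|$) the value $k:=|\St|\cdot|\varphi|$ is a stable timer bound for every transition game that can arise. Thus by the corollary following Proposition~\ref{prop:limit} we have $\mathcal{M},q\models\varphi$ iff $\mathcal{M},q\Vdash_{k}\varphi$, i.e.\ iff Eloise has a winning strategy in the $k$-bounded evaluation game $\evalgame(\mathcal{M},q,\varphi,k)$. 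Because timers must strictly decrease and start below $k$, every play of this bounded game is finite.

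The crux is that every such play is finite of \emph{polynomial} length and visits only polynomial-size positions. Within a single transition game a seeker turn lasts fewer than $k$ rounds (the timer drops at each round), the number of seeker alternations is bounded by the seeker turn counter $|\mathit{At}(\Phi)|\leq|\varphi|$, and each round consists of a constant number of subphases, with phase~\defstyle{i} iterating over the at most $|\varphi|$ relative atoms. A transition game is entered at most once along a play (the game ``never returns to an exited transition game'') and transition games nest only to the strategic-operator nesting depth of $\varphi$, hence at most $|\varphi|$ deep. Multiplying these polynomial factors bounds the play length by a polynomial in $|\St|$ and $|\varphi|$. Every position $(\pl,q,\psi,T)$ or configuration is likewise of polynomial size: $q$ and $\psi$ are pointers into $\mathcal{M}$ and $\varphi$, the truth function $T$ stores one of three values for each of at most $|\varphi|$ atoms, and the counter, timer, phase index and the auxiliary data of Remark~\ref{remark} are all polynomially bounded. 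With these bounds in hand I would simulate $\evalgame(\mathcal{M},q,\varphi,k)$ on an \emph{alternating} Turing machine that stores only the current position, branches existentially where Eloise moves and universally where Abelard moves, and on reaching an ending location accepts iff that location is winning for Eloise. Since the transitions and the winner of an ending location depend only on the current position, this machine runs in alternating polynomial time, and $\mathrm{APTIME}=\mathrm{PSPACE}$ gives membership; together with the inherited hardness this yields completeness.

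The main obstacle I anticipate is the play-length bound in the presence of nested strategic subformulae: one must argue that challenging a claim inside one transition game spawns a \emph{fresh, once-only} subgame rather than permitting uncontrolled re-entry, so that the total number of moves stays polynomial instead of compounding. This is precisely what the stable timer bound of Proposition~\ref{prop:limit} together with the ``never return to an exited transition game'' discipline secures. Finally, positional determinacy (Propositions~\ref{the: bounded determinacy} and~\ref{the: unbounded determinacy}) is what justifies that the alternating machine need not record any history beyond the current position, since the relevant truth history is already carried in the component $T$.
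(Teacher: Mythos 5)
Your proposal is correct and follows essentially the same route as the paper's own proof: inherit the lower bound from \cite{BJ}, use Theorem~\ref{the: equivalence to compositional semantics} together with Proposition~\ref{prop:limit} to reduce to the $(|\St|\cdot|\varphi|)$-bounded evaluation game, simulate that game on an alternating polynomial-time Turing machine (Eloise existential, Abelard universal), invoke positional determinacy to match the machine's acceptance with the positional-strategy truth definition, and conclude via $\mathrm{APTIME}=\mathrm{PSPACE}$. The only difference is that you spell out the polynomial bounds on play length and position size that the paper dismisses as routine.
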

\begin{proof}
We get the lower bound directly from \cite{BJ}, so we
only prove the upper bound here.
By Theorem~\ref{the: equivalence to compositional semantics} and Proposition~\ref{prop:limit},
if $\mathcal{M}$ is a finite \CGM, we
have $\mathcal{M},q\models\varphi$ iff
Eloise has a positional winning strategy in $\mathcal{G}(\mathcal{M},q,\varphi,N)$
with $N = |\St|\cdot|\varphi|$.
It is routine to construct an alternating Turing machine TM that 
simulates $\mathcal{G}(\mathcal{M},q,\varphi,N)$ such that the positions for Eloise correspond to existential states of TM and Abelard's positions to universal states. 
Due to the timer bound $N$, the machine runs in polynomial time.
It is clear that if Eloise has a (positional or not) winning strategy in the evaluation game, then TM accepts. 
Conversely, if TM accepts, we can read a non-positional winning strategy for Eloise from the the computation tree (with only one successful move for existential states
recorded everywhere) which demonstrates 
that TM accepts. By Proposition  \ref{the: bounded determinacy},
Eloise thus also has a positional winning strategy in the evaluation game.
%
%
%
%
%
%
Since $\mathrm{APTIME} = \mathrm{PSPACE}$, the claim follows.
\end{proof}

\aamasp{\vspace{-0.2cm}}

\subsection{A hierarchy of tractable fragments of \ATLplus}

We now identify a natural hierarchy of tractable fragments of $\ATL^+$.
Let $k$ be a positive integer. Define $\ATL^k$
to be the fragment of $\ATLplus$ where all formulae $\coop{A}\Phi$ have the property that 
$|\mathit{At}(\Phi)|\leq k$. 
Note that $\ATL^{1}$ is essentially the same as \ATL (with Release).
Note also that the number of non-equivalent formulae of $\ATL^k$ is
not bounded for any $k$, \emph{even in
the special case where the number of propositions and actions is constant},
because nesting of strategic operators $\coop{A}$ is not limited. 
%
Still, we will show that the model checking problem for $\ATL^k$ is $\mathrm{PTIME}$-complete for any fixed $k$.
Again $\CGM$s are encoded explicitly and no
restrictions on the number of propositions or
actions is assumed.
\techrep{(In fact, a certain implicit
encoding of $\CGM$s leads to  $\Delta^{\mathrm{P}}_3$-completeness \cite{laroussinie}.)}
%
%
%

%
With the fully developed \GTS in place, the following
theorem is now actually straightforward to prove. This demonstrates the
potential advantages of \GTS.
%

%

\aamasp{\vspace{-1mm}}
\begin{theorem}\label{the: hierarchy of tractable fragments}
For any fixed $k\in \bbN$, the model checking problem for 
$\ATL^k$  is $\mathrm{PTIME}$-complete. 
\end{theorem}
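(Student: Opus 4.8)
The plan is to prove the two bounds separately. For the \textbf{lower bound}, it suffices to observe that $\ATL^{1}$ is essentially $\ATL$ (with Release) and that $\ATL^{1}\subseteq\ATL^{k}$ for every $k\geq 1$. Since the perfect-recall model checking problem for $\ATL$ is already $\mathrm{PTIME}$-complete \cite{AHK02}, its $\mathrm{PTIME}$-hardness is inherited by $\ATL^{k}$, and no further work is needed here.

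The substance is the \textbf{upper bound}, i.e.\ a deterministic polynomial-time decision procedure. First I would reduce the problem to solving a single bounded evaluation game: by Theorem~\ref{the: equivalence to compositional semantics} together with Proposition~\ref{prop:limit}, for a finite \CGM{} $\mathcal{M}$ we have $\mathcal{M},q\models\varphi$ iff Eloise has a (positional) winning strategy in $\mathcal{G}(\mathcal{M},q,\varphi,N)$ with $N=|\St|\cdot|\varphi|$. Hence it is enough to decide the winner of this bounded game in polynomial time.

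The crucial point is that for $\varphi\in\ATL^{k}$ the number of \emph{positions} of $\mathcal{G}(\mathcal{M},q,\varphi,N)$ is polynomial in $|\mathcal{M}|$ and $|\varphi|$ (for fixed $k$). Using the fully extended notion of position from Remark~\ref{remark}, a position is determined by the active player ($2$ choices), the current state (at most $|\St|$), the current subformula or relative atom ($O(|\varphi|)$), the timer $\gamma<N$ (at most $N=|\St|\cdot|\varphi|$ values, all finite since $N$ is a natural number), the seeker-turn counter $n\leq|\mathit{At}(\Phi)|\leq k$, the phase index together with the finitely many claim/round flags (a constant number of choices), and, decisively, the truth function $T$. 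Since every strategic subformula $\coop{A}\Phi$ of an $\ATL^{k}$ formula satisfies $|\mathit{At}(\Phi)|\leq k$, there are at most $3^{k}=O(1)$ truth functions $T\colon\mathit{At}(\Phi)\to\{\top,\bot,\open\}$ in its transition game. This is precisely the ingredient that would be exponential for unrestricted $\ATLplus$ (where $|\mathit{At}(\Phi)|$ may be linear in $|\varphi|$), and bounding it by a constant is what makes the whole position set polynomial. Here I would also use that transition games do not nest on a stack: once a transition game is exited it is never re-entered, so a position need only record the configuration of the innermost game rather than an unbounded history of enclosing ones.

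Finally I would solve this finite game. By Proposition~\ref{the: bounded determinacy} the bounded evaluation game is positionally determined, so it suffices to compute, for each position, the winner from it. The position graph is acyclic: between two successive executions of phase \defstyle{i} the pair $(n,\gamma)$ strictly decreases in the lexicographic order with $n$ primary (a continuing seeker lowers $\gamma$, a seeker switch lowers $n$), while all Boolean and structural moves strictly decrease the formula and a challenged strategic atom is a proper subformula to which the game never returns. Thus no position repeats along a play, and backward induction over a topological ordering of the polynomially many positions computes the winner in polynomial time; combining the bounds gives $\mathrm{PTIME}$-completeness. I expect the \textbf{main obstacle} to be the careful position count: one must verify that every piece of bookkeeping hidden in the extended configurations of Remark~\ref{remark} contributes only polynomially (or a constant depending on $k$), so that the only potentially exponential factor is genuinely the number of truth functions, now capped at $3^{k}$; a secondary point to check is the well-foundedness of the position graph that justifies backward induction.
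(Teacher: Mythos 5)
Your proof is correct, and its key counting insight is exactly the paper's: for fixed $k$ the only potentially exponential parameter, the number of truth functions $T:\mathit{At}(\Phi)\to\{\top,\bot,\open\}$, is capped at $3^k$, so the position space of the evaluation game is polynomial in $|\mathcal{M}|$ and $|\varphi|$. Where you diverge is in how the resulting finite game is solved. The paper works with the \emph{unbounded} evaluation game and reuses the reduction from the proof of Proposition~\ref{the: unbounded determinacy}, turning it into a B\"{u}chi game of polynomial size and invoking the fact that B\"{u}chi games are solvable in $\mathrm{PTIME}$; no timer appears in the positions at all. You instead work with the \emph{bounded} game $\mathcal{G}(\mathcal{M},q,\varphi,N)$ for $N=|\St|\cdot|\varphi|$ (via Theorem~\ref{the: equivalence to compositional semantics} and Proposition~\ref{prop:limit}), pay a polynomial factor of $N$ for the timer component of each position, and then exploit the well-foundedness of the bounded game (the lexicographic decrease of $(n,\gamma)$, which is precisely the observation behind Proposition~\ref{the: bounded determinacy}) to run backward induction over an acyclic position graph. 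Your route is more elementary --- it needs no B\"{u}chi-game machinery, only reachability on a finite DAG --- at the price of carrying the timer in the state space and of the extra care you rightly flag about the bookkeeping in the extended configurations of Remark~\ref{remark}; the paper's route keeps the position space slightly leaner and connects the construction to standard infinite-game technology, which it also uses elsewhere for positional determinacy. Both are sound, and the lower bound argument (inheritance from $\ATL\cong\ATL^1$) is identical.
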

\begin{proof}
The claim is well-known for $\ATL$ (see \cite{AHK02}), so we
have the lower bound for free, for any $k$. 
One possible proof strategy for the upper bound would involve using
alternating $\mathrm{LOGSPACE}$-machines, but here we argue via B\"{u}chi-games instead.
%
%
\aamasp{
See the details of the reduction of unbounded evaluation
games to B\"{u}chi-games in the technical report \cite{GTS-ATLplus2017-techrep} 
(the proof for Proposition \ref{the: unbounded determinacy}).}

Consider a triple $(\mathcal{M},q,\varphi)$,
where $\varphi\in\ATL^k$. By the proof of 
Proposition \ref{the: unbounded determinacy},  
there exists a B\"{u}chi game BG
such that Eloise wins the
unbounded evaluation game $\mathcal{G}(\mathcal{M},q,\varphi)$ iff
she wins BG from the state of BG that corresponds to the
beginning position of the evaluation game. 
We then observe that
since we are considering $\ATL^k$ 
for a \emph{fixed} $k$, the domain size of each truth function $T$
used in the evaluation game is at most $k$,
and thus the number of positions in
$\mathcal{G}(\mathcal{M},q,\varphi)$ is \emph{polynomial} in the 
size of the input $(\mathcal{M},q,\varphi)$. 
(Cf. Remark~\ref{remark} for all the information that should be
encoded in a position in \emph{bounded} evaluation games;
here we only use the simpler unbounded games.)
Thus also the size of BG is polynomial in the input size. 

We note that, 
in order to avoid blow-ups, it is
essential that the maximum domain size $k$ of
truth functions $T$ is fixed.
We also note---as mentioned already in \cite{AHK02}---that the
number of transitions in $\mathcal{M}$ is not bounded
by the square of the number of states of $\mathcal{M}$.
In fact, because we impose no limit (other than finiteness) on the 
number of actions in $\mathcal{M}$,
the number of transitions in relation to states is arbitrary. However,
this is no problem to us since an explicit 
encoding of $\mathcal{M}$---which lists all
transitions explicitly---is part of the input to the model checking problem.
Since B\"{u}chi games can be solved in $\mathrm{PTIME}$, the claim follows.
\end{proof}

\vcut{
Note that the above proof provides more than just a
reduction of unbounded evaluation games on finite models to
B\"{u}chi games. It shows that such
evaluation games essentially \emph{are} B\"{u}chi games.
}

\aamasp{\vspace{-0.3cm}}

\section{Bounded memory semantics for $\ATL^k$}
\label{sec:BoundedMemory}


Strategies with bounded memory in concurrent game models 
can be naturally defined using \emph{deterministic finite state transducers} (or, Mealy machines). For a transducer-based definition of bounded memory strategies, see e.g. \cite{Vester13}, and see \cite{BrihayeLLM09} for more on this topic.
Using such strategies, an agent's moves are determined both by the current state in the model and by the current state (\emph{memory cell}) of the agent's transducer. Then,  transitions take place both in the model and in the state space of the transducer, thus updating the agent's memory.  
\techrep{
So, such strategies are positional with respect to the product of the two state spaces. 
}
In the compositional \defstyle{$m$-bounded memory semantics} ($\models^m$) for $\ATL^+$, agents are allowed to use at most $m$ memory cells, i.e., strategies defined by transducers with at most  $m$ states.  


\subsection{An upper bound for the number of memory cells}

Since the use of the truth function $T$ in our \GTS is analogous to the use of memory cells in $m$-bounded memory semantics, we obtain the following result.


%
\begin{theorem}\label{the: Upper bound for memory cells 1}
For $\ATL^k$, the unbounded \GTS is equivalent to the $m$-bounded memory semantics for $m = 3^k-2^k$.
\end{theorem}

\begin{proof}
Let $m:=3^k-2^k$ and $\varphi\in\ATL^k$. 
We show that 
\[
	\mathcal{M},q\Vdash\varphi \; \text{ iff } \; \mathcal{M},q\models^m\varphi.
\]
 The implication from right to left is immediate by Theorem~\ref{the: equivalence to compositional semantics}. We prove the other direction by induction on $\varphi$. The only interesting case is when $\varphi=\coop{A}\Phi$. Suppose that Eloise has a winning strategy in $\defstyle{g}({\bf E},q,\coop{A}\Phi)$. By Lemma~\ref{the: regular strategies} we may assume that $\tau$ is regular.

We define a memory transducer $\mathcal{T}$ that Eloise can use to define strategies for all agents in $A$. We fix the set of states $C$ of $\mathcal{T}$ to be the set of all 
truth functions $T$ for $\mathit{At}(\Phi)$ such that $T(\chi)=\open$ for at least one $\chi\in\mathit{At}(\Phi)$. Since $T(\chi)\in\{\open,\top,\bot\}$, we have $|C|\leq 3^k-2^k=m$. The initial state of $\mathcal{T}$ is $T_0$ 
where $T_0(\chi)=\open$ for every $\chi\in\mathit{At}(\Phi)$.
The transitions in $\mathcal{T}$ 
 are defined according to how Eloise updates the truth function $T$ during the transition game. However, when $T$ becomes fully updated (i.e. $T(\chi)\neq\open$ for every $\chi\in\mathit{At}(\Phi)$), then no further transitions are made, because in this case all relative atoms have been verified/falsified and the truth of $\Phi$ on the path is fixed.

Now, the strategy 
for each agent $a\in A$ is defined positionally on $C\times\St$
as follows: At a state $T$ of $\mathcal{T}$ and 
state $q \in \mathcal{M}$, the agent $a$ follows the action prescribed by Eloise's winning strategy for the corresponding step phase in the transition game. 
The strategy for $A$ is now well-defined since $\tau$ is regular and thus depends only on the current state and the current truth function.

It is now easy to show that $\mathcal{M},\Lambda\models^m\Phi$ for any path $\Lambda$ that is consistent with the resulting collective strategy for the coalition $A$. 
\end{proof}

By Theorem~\ref{the: equivalence to compositional semantics}, we obtain the following corollary.
\begin{corollary}
For $\ATL^k$, the perfect recall compositional semantics is equivalent to the $(3^k-2^k)$-bounded memory semantics.
\end{corollary}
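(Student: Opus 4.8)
The plan is to prove, by induction on the structure of $\varphi\in\ATL^k$, that $\mathcal{M},q\Vdash\varphi$ iff $\mathcal{M},q\models^m\varphi$, where $m=3^k-2^k$. The atomic and Boolean cases are routine, reducing to the induction hypothesis via the matching clauses of the two semantics, so the work is concentrated in the case $\varphi=\coop{A}\Phi$. The right-to-left direction is essentially immediate: an $m$-bounded memory strategy is a special kind of perfect-recall strategy, so $\models^m$ refines the compositional semantics, and together with the induction hypothesis applied to the strategic state subformulae occurring inside $\Phi$ and Theorem~\ref{the: equivalence to compositional semantics}, $\mathcal{M},q\models^m\coop{A}\Phi$ gives $\mathcal{M},q\Vdash\coop{A}\Phi$.

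For the left-to-right direction, assume Eloise has a winning strategy in the unbounded transition game $\defstyle{g}({\bf E},q,\coop{A}\Phi)$; by Claim~\ref{the: regular strategies} I may take it to be \emph{regular}, so that the actions it prescribes for the agents in $A$ depend only on the current state and the current truth function, not on the seeker or the seeker-turn counter. The central idea is to let the truth function itself serve as the agents' finite memory. I would build a transducer $\mathcal{T}$ whose memory cells are exactly the truth functions $T\colon\mathit{At}(\Phi)\to\{\top,\bot,\open\}$ with $T(\chi)=\open$ for at least one $\chi$; since $|\mathit{At}(\Phi)|\leq k$, there are at most $3^k-2^k=m$ of these. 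The initial cell is the everywhere-$\open$ function, the transitions of $\mathcal{T}$ mirror the updates that the regular strategy makes to $T$ along the constructed path, and once $T$ becomes fully determined no further memory transition is made. The collective strategy $S_A$ is then read off positionally from the product $C\times\St$: at a pair $(T,q)$ each agent in $A$ plays the action dictated by the regular strategy in the corresponding step phase.

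Two things then remain to be checked. First, that $S_A$ is well-defined: this is exactly where regularity is needed, since it guarantees that two game histories reaching the same pair $(T,q)$ prescribe the same action. Second, that every $\lambda\in\paths(q,S_A)$ satisfies $\mathcal{M},\lambda\models\Phi$, with the nested strategic atoms of $\Phi$ interpreted under $\models^m$ via the induction hypothesis. For this I would match $\lambda$ to a play of the transition game consistent with the regular winning strategy; since that strategy wins, Eloise cannot stay the seeker forever, so $T$ stabilizes after finitely many rounds, and Lemma~\ref{the: finite path lemma}(2) ensures that the Boolean value $T$ assigns to $\Phi$ coincides with the compositional truth of $\Phi$ on $\lambda$. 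Thus $S_A$ witnesses $\mathcal{M},q\models^m\coop{A}\Phi$.

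The main obstacle I expect is justifying that ``$T$ as memory'' loses no information that the perfect-recall winning strategy genuinely exploited; this is precisely the content of the regularity reduction (Claim~\ref{the: regular strategies}), and it is what makes the count $3^k-2^k$ correct, since fully-determined truth functions fix the truth of $\Phi$ on a finite prefix and hence need not be stored. Reconciling the convention ``stop transitioning once $T$ is determined'' with the requirement that the agents keep acting along the infinite tail is the delicate bookkeeping point, and it is settled by the finite-path Lemma, which shows the tail cannot alter the truth of $\Phi$.
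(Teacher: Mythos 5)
Your proposal is correct and follows essentially the same route as the paper: the corollary is obtained by chaining Theorem~\ref{the: equivalence to compositional semantics} with the preceding theorem (unbounded \GTS\ $\equiv$ $(3^k-2^k)$-bounded memory semantics), and your detailed argument for the latter — regularity via Claim~\ref{the: regular strategies}, the transducer whose memory cells are the not-fully-determined truth functions, the $3^k-2^k$ count, and the use of Lemma~\ref{the: finite path lemma} to handle the infinite tail — reproduces the paper's proof of that theorem faithfully.
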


This extends the known fact that positional strategies 
(using 1 memory cell) 
suffice for the semantics of \ATL (which is essentially the same as $\ATL^1$).
%
Moreover, given a formula, there is no need for the full perfect recall semantics, as we may equivalently apply the bounded memory semantics with a bound that is based on the structure of the formula (``the maximum temporal width'').

By $\ATL_{\F}^k$ we denote the fragment of $\ATL^k$ where all the relative atoms are of the form $\F\varphi$, that is, the ``temporal objectives'' $\Phi$ are
boolean combinations of reachability objectives.

\begin{theorem}\label{the: Upper bound for memory cells 2}
For $\ATL_{\F}^k$, the unbounded \GTS is equivalent to the $m$-bounded memory semantics for $m = 2^k-1$.
\end{theorem}

\begin{proof}
In $\ATL_{\F}^k$ we may modify the rules of the transition games in such a way that relative atoms cannot be falsified by the players (but naturally they can be verified). This is because $\F\psi$ is interpreted as $\top\U\psi$ and $\top$ is never false: if a player tried to falsify $\top\U\psi$, that player would immediately lose once the other player challenges the claim.
With this modification of the rules, there are at most $2^k$ different truth functions that may
appear in the transition games for $\ATL_{\F}^k$. Moreover, there is only a single truth function that is fully updated. Hence we may define a memory transducer $\mathcal{T}$ with $2^k-1$ states as in the proof of Theorem~\ref{the: Upper bound for memory cells 1} and prove the rest of the claim analogously.
\end{proof}

In the next subsection we will show that the result of Theorem~\ref{the: Upper bound for memory cells 2} is optimal in the sense that no smaller number of memory cells guarantees an equivalent semantics. Hence, even for $\ATL_{\F}^k$, the agents may need exponentially many memory cells with respect to the number of relative atoms. 


\subsection{A lower bound for the number of memory cells}

In this section we will investigate the following simple $\ATL_{\F}^k$-formula:
\[
	\xi_k:=\coop{a_1}\Phi_k, \quad\text{ where } \Phi_k := \F p_1\wedge\cdots\wedge\F p_k.
\]
Note that $\Phi_k$ is just a conjunction of reachability goals that agent $a_1$ needs to fulfill (in any order). Since positional strategies suffice for single reachability objectives, it would be intuitive to think that $a_1$ needs at most $k-1$ memory cells in order to achieve $\Phi_k$. This is because $a_1$ needs to change its positional strategy only when completing some of the reachability objectives.\footnote{This can be seen by analyzing our \GTS for \ATLplus: note that (1) the strategies in transition games may be assumed to be positional with respect to the truth function; and (2) the truth function for $\Phi_k$ can be updated at most most $k$ times during the transition game for $\Phi_k$.} However, we will see that the bounded memory strategy of $a_1$ must potentially use a transducer that has exponentially many states with respect to $k$. The model that we will use for proving this claim is constructed in the following example.

\begin{example}\label{ex: A model for exponential memory}
Let $[k]:=\{1,\dots, k\}$ and $\mathcal{M}_k := (\Agt, \St, \Prop, \Act, d, o, v)$ be a \CGM, where
\begin{itemize}[itemsep=-1mm]
\item $\Agt=\{a_1,a_2\}$, 
$\Prop=\{p_1,\dots,p_k\}$;

\item $\Act=[k]\,\cup\,\{B\mid B\subseteq\mathcal{P}([k])\setminus\{\emptyset\}\}\cup\{\void\}$;

\item $\St=\{q_0\}\cup\{q_i\mid i\in[k]\}\cup\bigl\{q_B\mid B\in\mathcal{P}([k])\setminus\{\emptyset,[k]\}\bigr\}$;

\item $v(p_i)=\{q_i\}\cup\{q_B\in\St \mid i\in B\}$ for all $p_i\in\Prop$;

\item $d(q_0,a_1)=\{B\mid B\in\mathcal{P}([k])\setminus\{\emptyset\}\}$, \; $d(q_0,a_2)=[k]$ \\
and $d(q,a_i)=\{\void\}$ when $q\in\St\setminus\{q_0\}$ and $i\in\{1,2\}$;

\item $o(q_0,(B,i))=\begin{cases}q_i \quad\text{if } i\in B,\\ q_B \quad\text{else;}\end{cases}$ \\[1mm]
$o(q_i,(\void,\void))=q_0$ when $i\in[k]$ \\
and $o(q_B,(\void,\void))=q_B$ when $B\in\mathcal{P}([k])\setminus\{\emptyset,[k]\}$.
\end{itemize}
See the following figure for model $\mathcal{M}_k$ in the special case when $k=3$.

\begin{center}
\begin{tikzpicture}[scale=0.9,
	state/.style={draw,circle,inner sep=2pt,minimum width=8mm,font=\tiny},
	action/.style={-latex,font=\tiny}]
	\node at (-4.2,3.3) {$\mathcal{M}_3:$};
	\node at (0,0) [state] (0) {\phantom{$p_4$}};
	\node at (0,3) [state] (1r) {$p_1$};
	\node at (2.9,-1.5) [state] (2r) {$p_2$};
	\node at (-2.9,-1.5) [state] (3r) {$p_3$};
	\node at (2.2,2.2) [state] (1) {$p_1$};
	\node at (1,-2.8) [state] (2) {$p_2$};
	\node at (-3.2,0.8) [state] (3) {$p_3$};
	\node at (3.2,0.8) [state] (12) {$p_1,p_2$};
	\node at (-2.2,2.2) [state] (13) {$p_1,p_3$};
	\node at (-1,-2.8) [state] (23) {$p_2,p_3$};
	\node[above=0pt of 0] {$q_0$};
	\node[above=0pt of 1r] {$q_1$};
	\node[right=0pt of 2r] {$q_2$};
	\node[left=0pt of 3r] {$q_3$};
	\node[right=0pt of 1] {$q_{\{1\}}$};
	\node[right=0pt of 2] {$q_{\{2\}}$};
	\node[below=0pt of 3] {$q_{\{3\}}$};
	\node[below=0pt of 12] {$q_{\{1,2\}}$};
	\node[left=0pt of 13] {$q_{\{1,3\}}$};
	\node[left=0pt of 23] {$q_{\{2,3\}}$};
	\draw[action, right, bend right, near end, text width=5mm] (0) to node 
		{$\{1,2,3\},1$ $\{1,2\},1$ $\{1,3\},1$ $\{1\},1$} (1r);
	\draw[action, left, bend right, near start, text width=3mm] (1r) to node {$\void$, $\void$} (0);	
	\draw[action, sloped, bend right, text width=7mm] (0) to node 
		{$\{1,2,3\},2$ $\{1,2\},2$ $\{2,3\},2$ $\{2\},2$ $\phantom{9000}$ $\phantom{9000}$} (2r);
	\draw[action, right, bend right, near start] (2r) to node {$\void,\void$} (0);
	\draw[action, sloped, bend right, text width=11mm] (0) to node 
		{$\phantom{90000}$ $\phantom{90000}$ $\{3\},3$ $\{1,3\},3$ $\{2,3\},3$ $\{1,2,3\},3$} (3r);
	\draw[action, below, bend right, near start] (3r) to node {$\void,\void$} (0);
	\draw[action, below, near end, text width=1mm] (0) to node {$\{1\},2$ $\{1\},3$} (1);
	\draw[action, left, near end, text width=7mm] (0) to node {$\{2\},1$ $\{2\},3$} (2);
	\draw[action, above, near end, text width=7mm] (0) to node {$\{3\},1$ $\{3\},2$} (3);
	\draw[action, below, sloped, near end] (0) to node {$\{1,2\},3$} (12);
	\draw[action, above, sloped, near end] (0) to node {$\{2,3\},1$} (23);
	\draw[action, above, sloped, near end] (0) to node {$\{1,3\},2$} (13);
	\draw[action, loop above] (1) to node {$\void,\void$} (1);
	\draw[action, loop below] (2) to node {$\void,\void$} (2);
	\draw[action, loop left, text width=3mm] (3) to node {$\void$, $\void$} (3);
	\draw[action, loop right, text width=3mm] (12) to node {$\void$, $\void$} (12);
	\draw[action, loop above] (13) to node {$\void,\void$} (13);
	\draw[action, loop below] (23) to node {$\void,\void$} (23);
\end{tikzpicture}
\end{center}

The model $\mathcal{M}_k$ can be described as follows: At $q_0$ the agent $a_1$ gets to ``announce'' any nonempty set $B$ of (indices of) proposition symbols in $\Prop$. Then, depending on the action chosen by the agent $a_2$, one of the following happens:
\begin{enumerate}
\item \emph{Some} proposition symbol $p_i$, for which $i\in B$, is reached and then the game returns to $q_0$. This happens when $a_2$ chooses $i\in B$, whence a transition is made to $q_i$ and then back to $q_0$. 

\item \emph{All} proposition symbols $p_i$ with $i\in B$ are reached, but thereafter no new proposition symbols can be reached. This happens when $a_2$ chooses some $i\notin B$, whence a transition is made to $q_B$, where the game will loop forever.
\end{enumerate}

We will show that agent $a_1$ has a $(2^k-1)$-bounded memory strategy $\sigma_{a_1}$ which guarantees the truth of $\Phi_k$ on every path in $\paths(q_0,\sigma_{a_1})$. 
We first define a finite state transducer $\mathcal{T}_k$ as follows:
\begin{itemize}
\item The set of states $C$ of $\mathcal{T}_k$ is $\{c_B\mid B\in\mathcal{P}([k])\setminus\{\emptyset\}\}$. Now $|C|=2^k-1$. 
\item The initial state of $\mathcal{T}_k$ is $c_{[k]}$. 
\item The transitions of $\mathcal{T}_k$ are define as follows: Suppose that the current state of $\mathcal{T}_k$ is $c_B$ for some $B\in\mathcal{P}([k])\setminus\{\emptyset\}$ and a state $q_j$ is reached for some $j\in[k]$. Now if $j\in B$ and $B\neq\{j\}$, then $\mathcal{T}_k$ changes its state to $c_{B\setminus\{i\}}$. Else, no transition is made.
\end{itemize}
See the following picture for the transducer $\mathcal{T}_k$ in the special case when $k=3$. 
\begin{center}
\begin{tikzpicture}[scale=1,
	state/.style={draw,circle,minimum width=7.7mm},
	action/.style={-latex,font=\small}]
	\node at (-3.2,0.5) {$\mathcal{T}_3:$};
	\node at (0,0) [state] (123) {};
		\node at (0,0) [draw,circle,minimum width=6.3mm] {};
	\node at (-2,-1.5) [state] (12) {};
	\node at (0,-1.5) [state] (13) {};
	\node at (2,-1.5) [state] (23) {};
	\node at (-2,-3) [state] (1) {};
	\node at (0,-3) [state] (2) {};
	\node at (2,-3) [state] (3) {};
	\node[above=0pt of 0] {$c_{\{1,2,3\}}$};
	\node[left=0pt of 12] {$c_{\{1,2\}}$};
	\node[right=0pt of 13] {$c_{\{1,3\}}$};
	\node[right=0pt of 23] {$c_{\{2,3\}}$};
	\node[left=0pt of 1] {$c_{\{1\}}$};
	\node[right=0pt of 2] {$c_{\{2\}}$};
	\node[right=0pt of 3] {$c_{\{3\}}$};
	\draw[action, left] (0) to node {$q_3$} (12);
	\draw[action, left] (0) to node {$q_2$} (13);
	\draw[action, right] (0) to node {$q_1$} (23);
	\draw[action, left] (12) to node {$q_2$} (1);
	\draw[action, left, near start] (12) to node {$q_1$} (2);
	\draw[action, right, near start] (13) to node {$\,q_3$} (1);
	\draw[action, left, near start] (13) to node {$q_1$} (3);
	\draw[action, right] (23) to node {$q_2$} (3);
	\draw[action, right, near start] (23) to node {$\,q_3$} (2);
\end{tikzpicture}
\end{center}

Intuitively, the set $B$, when it is the index of $c_B$, denotes the set of indices of those proposition symbols $p_i$ that \emph{have not yet been reached}.
We then define the strategy $\sigma_{a_1}$ simply to select the action $B$ at $q_0$ when the current state of $\mathcal{T}_k$ is $c_B$. (The action $\void$ is selected elsewhere.) 
It is easy to see that $\sigma_{a_1}$ is a strategy that satisfies $\Phi_k$ on every path.

Note that by using $\mathcal{T}_k$, the agent $a_1$ essentially remembers which \emph{subset} of $\{p_1,\dots,p_k\}$ of proposition symbols have already been reached. But $a_1$ does not have to remember in which order these states have been visited; if the order was remembered as well, then the number of states in $\mathcal{T}_k$ would be the number of $k$-permutations plus the initial  state, resulting in $k!+1$ states. 
\end{example}

We prove the following lemma for the model $\mathcal{M}_k$ constructed in Example~\ref{ex: A model for exponential memory}.

\begin{lemma}\label{memory lemma}
$\mathcal{M}_k\not\models^m \xi_k$ when $m<2^k-1$.
\end{lemma}

\begin{proof}
Let $\sigma_{a_1}$ be a strategy for $a_1$ using a transducer $\mathcal{T}$ with less than $2^k-1$ states. We will show that there is a path in $\paths(q_0,\sigma_{a_1})$ on which $p_i$ is not reached for some $i\in[k]$.

We first make the following two observations (i) and (ii):

\smallskip

(i) Suppose $a_1$ chooses some $B\in\mathcal{P}([k])\setminus\{\emptyset\}$ at $q_0$ for which $i\notin B$ for some $p_i$ that \emph{has not yet been reached}. Now the next state may be $q_B$ where it will loop forever. Since $q_B\notin v(p_i)$, the proposition $p_i$ will never be reached.

(ii) Suppose now that $a_1$ chooses some $B$ at $q_0$ for which $i\in B$ for some $p_i$ that \emph{has already been reached}. Now the next state may be $q_i$ and thereafter the game returns to $q_0$. Since $p_i$ is the only proposition symbol that is true at $q_i$, these transitions did not reach any new proposition symbols.

By the points above, we see that in order to reach all $p_i$, the agent $a_1$ has to choose such a set $B$ at $q_0$ which has the indexes of \emph{exactly those} proposition symbols which have not yet been reached. We denote this behavior of $a_1$ by ($\star$).

Since $\mathcal{T}$ has less than $2^k-1$ states, and $|\mathcal{P}([k])\setminus\{\emptyset\}|=2^k-1$, there must be $B'\in\mathcal{P}([k])\setminus\{\emptyset\}$ which $a_1$ never chooses at $q_0$ when following $\sigma_{a_1}$. Supposing that $a_1$ plays according to ($\star$), it may happen that exactly those $p_i$ for which $i\in [k]\setminus B$ are reached (by visiting the corresponding states $q_i$ ($i\in [k]\setminus B$) and returning to $q_0$ after every visit). But, in this situation it is no longer possible for $a_1$ to follow ($\star$) and thus impossible to reach all $p_i$ for which $i\in B$. 
%
\end{proof}

By Example~\ref{ex: A model for exponential memory} and Lemma~\ref{memory lemma} we immediately obtain the following corollary.

\begin{corollary}\label{the: Lower bound for memory cells}
The perfect recall semantics for $\ATL_{\F}^k$  is not equivalent to $m$-bounded memory semantics for any $m<2^k-1$.
\end{corollary}

By this result, agents may need an exponential number of memory cells with respect to the number of relative atoms (in the Boolean combination). Again, this result holds even in the simple case where $\Phi$ is just a conjunction of reachability objectives $\F p$.
Corollary~\ref{the: Lower bound for memory cells} also implies that the result of Theorem~\ref{the: Upper bound for memory cells 2} is optimal. We leave it open whether the result of Theorem~\ref{the: Upper bound for memory cells 1} could be improved.
%


\subsection{Some remarks on the amount of memory needed for a strategy}


There are several ways in which memory resources play a role in strategies.
Besides the read-only memory needed to encode a strategy, for the execution of that strategy one can distinguish the amounts of memory needed:

(i) to store any possible input of the strategy,

(ii) to compute the value of the strategy function on any given input,

(iii) to execute the strategy in any single play.

\noindent
Generally, these can be very different. Usually, the first one is taken as the measure of the memory consumption of a strategy in terms of the required input size (i.e., memoryless, bounded memory, unbounded/perfect recall), while the second is usually disregarded and strategies are assumed to be computed by -- or even hardwired in -- some external devices (``black boxes''). As for the third measure, which involves both the previous two, we are not aware of any explicit consideration of it in the literature. We will make some brief comparing remarks for the case of bounded memory strategies considered here.

From Corollary~\ref{the: Lower bound for memory cells} we see that agents may need a strategy transducer with $2^k-1$ memory cells when there are $k$ reachability objectives. This is because a strategy is a \emph{global} plan of action---or a look-up table---that must take into account all possible plays.
However, by observing the use of truth function in transition games, we see that \emph{in every single play of the game} only $k-1$ memory cells need to be used. That is, the finite state transducer needs to visit only $k-1$ states on every path (c.f. Example~\ref{ex: A model for exponential memory} and the transducer $\mathcal{T}_k$).
Thus, the state space of the transducer has to be exponential with respect to the number of reachability objectives, but only a linearly large section of the transducer is actually used in every single play.
In fact, the latter is to be expected, in the light of the PTIME complexity of model checking of $\ATL^k$, by Theorem~\ref{the: hierarchy of tractable fragments}.   
This observation suggests that the amount of RAM-type of memory needed to use during the play may be a reasonable measure, alternative to the 
number of states in the transducer encoding the agent's strategy in enforcing or refuting a formula $\ATLplus$ (and for other related logics). Thus, one could argue that agents actually only need to use linear amount of memory in $\ATL^k$, supposing they can manage their memory in a more dynamical (``on-the-fly'') way\footnote{This is also justified from the `human perspective', as people can manage to do, say, 10 tasks by remembering what is already done (by remembering at most $9$ pieces of information) without need for exponential memory capacity (which would be $1023$ memory cells by Theorem~\ref{the: Lower bound for memory cells}).}.

\vcut{
\techrep{
For a better bound of the required memory it would be sufficient to modify slightly the transition games and consider a bound $k$ not on the number of all relative atoms in strategic subformulae, but only of the temporal objectives occurring in them.
}
}

\aamasp{\vspace{-0,1cm}}


\section{Conclusion}
\label{sec:concl}


%
%
\aamasp{
A natural extension of the present work would be to develop \GTS for the full $\ATLs$.
}

In conclusion, we note that the game-theoretic semantics for $\ATLplus$ developed here has both conceptual and technical importance, as it explains better how the memory-based strategies in the compositional semantics can be generated and thus also provides better insight on the algorithmic aspect of that semantics. 

We note that a \GTS for \ATLplus, alternative to the one introduced here, could
be obtained via \GTS for coalgebraic fixed point logic
\cite{vene2006, pattinson2009}. 
However, such a semantics (being designed for more powerful logics) would not directly lead to our \GTS that is custom-made for \ATLplus and would thus
not directly enable the complexity analysis that we require.
Also, that alternative approach would not give a semantics where the construction of 
finite paths only suffices. 
%

%
A natural extension of the present work would be to develop \GTS for the full $\ATLs$. Here the
correspondence with B\"uchi games could be exploited in full.

\bibliographystyle{plain}
\bibliography{VG-ATL}

\begin{thebibliography}{10}

\bibitem{AHK02}
R.~Alur, T.~A. Henzinger, and O.~Kupferman.
\newblock Alternating-time temporal logic.
\newblock {\em J. ACM}, 49(5):672--713, 2002.

\bibitem{BrihayeLLM09}
Thomas Brihaye, Arnaud Da~Costa Lopes, Fran\c{c}ois Laroussinie, and Nicolas
  Markey.
\newblock {ATL} with strategy contexts and bounded memory.
\newblock In S.~Art{\"e}mov and A.~Nerode, editors, {\em Proc. of LFCS 2009},
  volume 5407 of {\em LNCS}, pages 92--106. Springer, 2009.

\bibitem{BJ}
Nils Bulling and Wojciech Jamroga.
\newblock Verifying agents with memory is harder than it seemed.
\newblock {\em {AI} Commun.}, 23(4):389--403, 2010.

\bibitem{CHP06}
K.~Chatterjee, T.A. Henzinger, and N.~Piterman.
\newblock Algorithms for {B}uchi games.
\newblock In {\em 3rd Workshop on Games in Design and Verification}, 2006.

\bibitem{pattinson2009}
Corina C{\^{\i}}rstea, Clemens Kupke, and Dirk Pattinson.
\newblock {EXPTIME} tableaux for the coalgebraic
  \emph{{\(\mathrm{\mu}\)}}-calculus.
\newblock In {\em {CSL} 2009}, pages 179--193, 2009.

\bibitem{GKR-AAMAS2016}
Valentin Goranko, Antti Kuusisto, and Raine R{\"{o}}nnholm.
\newblock Game-theoretic semantics for alternating-time temporal logic.
\newblock In {\em Proc. of AAMAS 2016}, pages 671--679.

\bibitem{GKR-AAMAS2017}
Valentin Goranko, Antti Kuusisto, and Raine R{\"{o}}nnholm.
\newblock Game-theoretic semantics for {ATL$^{+}$} with applications to model
  checking.
\newblock In {\em Proc. of AAMAS 2017}, pages 1277--1285, 2017.

\bibitem{GTS-ATL-2018}
Valentin Goranko, Antti Kuusisto, and Raine R\"onnholm.
\newblock Game-theoretic semantics for alternating-time temporal logic.
\newblock {\em ACM Trans. Comput. Log.}, 19(3):17:1--17:38, 2018.

\bibitem{grade}
Erich Gr{\"{a}}del and Igor Walukiewicz.
\newblock Positional determinacy of games with infinitely many priorities.
\newblock {\em Logical Methods in Computer Science}, 2(4), 2006.

\bibitem{HintikkaSandu97}
Jaakko Hintikka and Gabriel Sandu.
\newblock Game-theoretical semantics.
\newblock In J.~van Benthem and A.~ter Meulen, editors, {\em Handbook of Logic
  and Language}, pages 361--410. 1997.

\bibitem{laroussinie}
Fran{\c{c}}ois Laroussinie, Nicolas Markey, and Ghassan Oreiby.
\newblock On the expressiveness and complexity of {ATL}.
\newblock {\em Logical Methods in Computer Science}, 4(2), 2008.

\bibitem{Mazala01}
Ren{\'{e}} Mazala.
\newblock Infinite games.
\newblock In Erich Gr{\"{a}}del, Wolfgang Thomas, and Thomas Wilke, editors,
  {\em Automata, Logics, and Infinite Games: {A} Guide to Current Research},
  volume 2500 of {\em LNCS}, pages 23--42. Springer, 2001.

\bibitem{vene2006}
Yde Venema.
\newblock Automata and fixed point logic: {A} coalgebraic perspective.
\newblock {\em Inf. Comput.}, 204(4):637--678, 2006.

\bibitem{Vester13}
Steen Vester.
\newblock Alternating-time temporal logic with finite-memory strategies.
\newblock In {\em Proc, of GandALF 2013}, volume 119 of {\em EPTCS}, pages
  194--207, 2013.

\end{thebibliography}
\end{document}